\documentclass[12pt]{article}
\usepackage[a4paper,top=3cm,bottom=3.5cm,left=1.5cm,right=1.5cm]{geometry}

 \usepackage{hyperref}

\usepackage{amsmath}
\usepackage{amssymb}
\usepackage{amsthm}
\usepackage{mathrsfs}
\usepackage{calligra}

\usepackage[T1]{fontenc}

\usepackage{comment}
\usepackage{stmaryrd}

\numberwithin{equation}{section}

\theoremstyle{plain}
\newtheorem{proposition}{Proposition}[section]

\newtheorem{lemma}[proposition]{Lemma}
\newtheorem{theorem}[proposition]{Theorem}
\newtheorem{corollary}[proposition]{Corollary}

\theoremstyle{definition}
\newtheorem{remark}[proposition]{Remark}

\theoremstyle{remark}

\usepackage{graphicx}

\usepackage[bbgreekl]{mathbbol}
\usepackage{pifont, bbm}
\usepackage{bm}

 \usepackage{color}

\def\Vect{\mathfrak{X}}
\def\VV{\Vect(A)}

\def\oA{\otimes_A}
\def\dd{\mathrm{d}}

\def\id{\mathrm{id}}

\def\MMM{\mathscr{M}}

\def\FF{\mathcal{F}}
\def\ra{\triangleright}
\def\RA{\ra}

\def\LLL{\mathscr{L}^{\mathcal{}}}
\def\LL{\mathscr{L}^{\mathcal{}}}

\def\RR{\mathcal{R}}

\def\Tau{\mathcal T}

\def\le{\langle}
\def\re{\rangle}

\def\FF{\mathcal F}

\def\s'O{\stackrel{_{{\displaystyle\st \footnotesize '}}}{_{^{^{\displaystyle\otimes}}}}}

\def\1s{{1_\st }}
\def\3s{{3_\st }}
\def\2s{{2_\st }}
\def\ef1{{1_\FF}}
\def\ef2{{3_\FF}}
\def\ef3{{2_\FF}}

\def\TT{{\mathcal T}}

\def\le{\langle}
\def\re{\rangle}

\def\ker{{\rm ker}}

\def\AA{A}

\def\dd{{\nabla}}
\def\dif{{\mathrm d}}

\def\Rsf{\mathsf{R}}
\def\Tsf{\mathsf{T}}

\def\sym{{\rm{sym}}}

\def\ii{\mathrm{i}}

\def\oR{\bar R}
\def\al{\alpha}
\def\be{\beta}

\def\cop{{{\scalebox{0.67}[0.67]{\mbox{$\! cop$}}}\!}}
\def\dd{\mathrm{d}}

\def\cat{{{}^H_\AA\MMM_\AA^\sym}}

\def\catfp{{{}^H_\AA\MMM_\AA^{\sym, \rm{fp}}}}

\def\std{{\raisebox{\depth}{\scalebox{1.45}[-1]{\mbox{$\mathbb \Delta$}}}}_{\!}}
\def\dst{{}_{\,}\reflectbox{\mbox{$\std$}}_{\!}}

\def\pstd{{}^{\phantom{J}}_{\raisebox{\depth}{\scalebox{1.1}[-.75]{\mbox{${\mathbb
            \Delta}$}}}}}
\def\pdst{{}_{\,}{\reflectbox{\mbox{$\pstd$}}}}

\def\Dst{\dd_{\!\!}\pdst}
\def\stD{\dd\pstd}

\def\Dst{\dd_{\!\!}\pdst}
\def\stD{\dd\pstd}

\def\Bstd{{\raisebox{\depth}{\scalebox{1}[-1]{\mbox{$\bm{\Delta}$}}}_{\!}}}
\def\Bdst{{}_{\,}\reflectbox{\mbox{$\Bstd$}}_{\!}}

\def\Bpstd{{}^{\phantom{J}}_{\raisebox{\depth}{\scalebox{.75}[-.75]{\mbox{${\bm
            \Delta}$}}}}}
\def\Bpdst{{}_{\,}{\reflectbox{\mbox{$\Bpstd$}}}}

\def\BDst{\dd_{\!\!}\Bpdst}

\def\stDu{\stD{}_{\mbox{$\!{}^{}_u$}}}
\def\stDv{\stD{}_{\mbox{$\!{}^{}_v$}}}
\def\stDz{\stD{}_{\mbox{$\!{}^{}_z$}}}
\def\stDalu{\stD{}_{\mbox{$\!{}^{}_{\oR_\al\ra u}\!$}}}
\def\stDalpv{\stD{}_{\mbox{$\!{}^{}_{\oR^\al\ra v}\!$}}}

\def\stDz{\stD{}_{\mbox{$\!{}^{}_z$}}}

\def\stDaz{\stD{}_{\mbox{$\!{}^{}_{{\:\!}^\al \!z}\!$}}}
\def\stDbaz{\stD{}_{\mbox{$\!{}^{}_{{\:\!}^{\beta\al}\! z}\!$}}}
\def\stDav{\stD{}_{\mbox{$\!{}^{}_{{\,}^\al\!\;\! v}\!$}}}

\def\ddst{{\phantom{\!\!\!\dd}}_{\!\!}\pdst}
\def\stdd{\phantom{\!\!\!\dd}\pstd}

\def\OMmu{\Omega^{\bullet -1\!}}
\def\OMP2{\Omega^{\bullet +2\!}}
\def\OMm2{\Omega^{\bullet -2\!}}
\def\OM{{\Omega^{\!\!\;\bullet}\!\!\:(A)}}

\def\Con{\mathrm{Con}}
\def\hom{\mathrm{hom}}

\def\hxi{h}

  \DeclareMathAlphabet{\mathcalligra}{T1}{calligra}{m}{n}
 \DeclareFontShape{T1}{calligra}{m}{n}{<->s*[1.9]callig15}{}

\usepackage{authblk}

\title{\bf Bianchi identities in noncommutative  geometry}
\author{Paolo Aschieri}
\affil{\small{
\centerline{{\sl Dipartimento di Scienze e Innovazione 
Tecnologica, }} {{Universit\`a del Piemonte Orientale}}
\centerline{{\sl  
Viale T. Michel - 15121, Alessandria, Italy}}
\centerline{{\sl INFN, Sezione di Torino, 
  via P. Giuria 1, 10125 Torino, Italy}}
\centerline{\texttt{paolo.aschieri@uniupo.it}}}}
\date{\small\today}

\begin{document}

\maketitle 
\abstract{
We  first present an introduction to the differential geometry of noncommutative algebras that carry a representation of a
  triangular Hopf algebra. Their  noncommutativity is encoded in
  the universal $R$-matrix. The differential geometry is canonically
  constructed from these data. We then develop a new approach to the Bianchi identities for curvature and torsion of arbitrary connections (not necessarily bimodule connections).
  Using the Cartan calculus for connections, we prove the equivalence of their global formulations in terms of exterior forms and tensor fields.
In particular, we obtain the noncommutative analogues of the first and second Bianchi identities familiar from general relativity.}
 \tableofcontents

\section{Introduction}
The Bianchi identities are central to differential geometry and mathematical physics. The second (or differential) Bianchi identity is a key step to Chern-Weil theory of characteristic classes, appears as integrability condition for the Yang-Mills equations, and, through the contracted Bianchi identity in General Relativity, implies the covariant conservation of the energy-momentum tensor. The first (or algebraic) Bianchi identity involves the covariant derivative of the torsion tensor. When the torsion vanishes, it becomes an algebraic constraint on the Riemann curvature tensor that leads to the symmetry properties of the Ricci tensor and is also needed in the derivation of the contracted Bianchi identity.

The algebraic study of connections on vector bundles formulated as derivation laws on the module of sections was extensively treated in \cite{Koszul}. This paved the way to the generalization to the noncommutative case with vector bundles described as projective modules over the noncommutative algebra of functions ``of the quantum base space''. 
The study of the second Bianchi identity in noncommutative geometry can be traced back to Part II, \S 2 of the seminal work  \cite{Connes} on Chern–Connes pairing between K-theory and cyclic cohomology. It is treated more explicitly in \cite[\S 7 \& \S 8]{Landi}, while  a more recent and extensive exposition, including the first Bianchi identity, is given in \cite[\S 4]{BM}, following \cite{QB_BM}. In these approaches the curvature is the square of the connection and the Bianchi identity is obtained by differentiating the curvature.

A further motivation for the present work is that the Bianchi identities on projective modules can contribute to the study of the curvature, the Cartan structure equations and the Bianchi identities from a noncommutative principal bundle perspective. There we have different approaches, see for example \cite{BrM},  where the curvature may be viewed as taking values in the quantum universal enveloping algebra (dual to the quantum structure group), and \cite{DuII}, \cite{ALP}, where the curvature is quantum Lie algebra valued, as in \cite{DCGRS}.
The present study points toward the quantum universal enveloping algebra description, as advocated in \cite{Wess}, see Remark \ref{uea}.
\\

In this paper we study the noncommutative Bianchi identities at three different levels. The first two  require only a differential calculus and a connection on a projective module, whereas the third uses the differential geometry associated with representations of a triangular Hopf algebra. In this richer context, using the Cartan calculus for arbitrary left connections (not necessarily bimodule connections) we prove the equivalence of the exterior-form and tensor formulations of the Bianchi identities.

{\bf (i)} Let  $(\OM=\oplus_{n\in \mathbb{N}}\Omega^n(A),\dd)$ be a graded differential algebra over an algebra $A$, with $\Omega^0(A)=A$ and $\Omega(A):=\Omega^1(A)$, and let $\std: \Gamma\to \Omega(A)\otimes_A \Gamma$ be a left connection on a left $A$-module $\Gamma$. The left connection lifts to a left connection $\dd\stdd:\OM\otimes_A\Gamma\to \Omega^{\bullet+1}\oA \Gamma$ and its curvature
${\stD}^{\!\!\!\!\!2}:\OM\otimes_A\Gamma\to \Omega^{\bullet+2}\otimes_A \Gamma$ trivially satisfies the Bianchi identity $\dd\stdd\circ{\stD}^{\!\!\!\!\!2}\,-\,{\stD}^{\!\!\!\!\!2}\,\circ \dd\stdd=0$. This is essentially the approach followed in the references cited above.

{\bf (ii)} Suppose that $\Gamma$ is finitely generated and projective.
Using generators of $\Gamma$ and of its dual module ${}^*\Gamma$, forming a dual basis, we introduce
 globally defined connection one-form coefficients $A_i{}^j$ and curvature two-form coefficients $R_i{}^j$. The Bianchi identity becomes (sum over repeated indices understood)
\begin{equation*}\label{uno}
\dd R_i{}^k\, m_k{}^j+R_i{}^k\wedge A_k{}^j-A_i{}^k\wedge R_k{}_{}^j=0    ~,
\end{equation*}
where the $A$-valued matrix $(m_\ell{}^j)$ is given by the evaluation of the $j$-th generator of ${}^*\Gamma$ on the $\ell$-th generator of $\Gamma$. For a free module  $m_\ell{}^j=\delta_\ell^j$, and  the usual coefficient expression of the Bianchi identity is recovered, even when $A$ is noncommutative. This analysis sharpens and complements the dual formulation in \cite{LC}, where additional assumptions were imposed on $A$ and $\Gamma$. 

{\bf (iii)} We then study the identity for the curvature tensor $R\stdd: \Vect(A)\wedge\Vect(A)\oA\Gamma\to \Gamma$, where $\Vect(A)={}^*\Omega(A)$ is the space of vector fields dual to that of one-forms $\Omega(A)$.  Since $\std$ is a left connection $R\stdd$ is a left $A$-linear map, although it is not in general an $A$-bimodule map. The study of the Bianchi identity at this third level requires extra structure on the algebra $A$. We show that the differential geometry developed in \cite{LC} for $A$ carrying  a representation of a triangular Hopf algebra $(H,\mathcal{R})$ leads to  explicit identities that  recover, in the commutative case, the usual Bianchi identity expressed in terms of $R\stdd$ and its covariant derivative; see Theorems \ref{BIANCHI2FORR} and \ref{BIandT}. For example,  for all $u,v,z\in \Vect(A)$,  the following linear map $\Gamma\to\Gamma$ vanishes:
\begin{equation}\label{due}
\std{}_u\circ R\stdd(v,z,\mbox{-})-R\stdd(u,v,\mbox{-})\circ \std{}_z + R\stdd(u,[v,z],\mbox{-})+cp^{}_{\mathcal{R}}(u,v,z)=0~.
\end{equation}
Here $cp^{}_{\mathcal{R}}(u,v,z)$ denotes the sum of the corresponding expressions obtained from the other two cyclic ${\mathcal{R}}$-permutations of $u\otimes v\otimes z$. These permutations are implemented by the categorical symmetry  induced by the triangular $R$-matrix ${\mathcal{R}}=\mathcal{R}^{-1}_{21}$ of $H$. In particular, the elementary transposition is
$u\otimes v\to \oR^\al\ra v\:\!\otimes\oR_\al\ra u$, where ${\mathcal R}^{-1}=\oR^\al\otimes \oR_\al\in H\otimes H$.
\\

We give an analogous three-level analysis of the first Bianchi identity when $\Gamma=\Vect(A)$. In particular, we obtain the following identity \(
R\stdd(u,v,z)-\big({\std}_u\circ T\!\stdd- T\!\stdd\circ {\std}_u\big)(v, z) + T\!\stdd(u, (T\!\stdd(v,z))+ cp^{}_{\mathcal{R}}(u,v,z)=0$, cf. Theorem \ref{Bianchi1} and Corollary \ref{corT}. When the torsion  $T\!\stdd$ vanishes, this reduces to the vanishing  of the curvature tensor under cyclic $\mathcal{R}$-permutations, recovering the algebraic constraint obtained in \cite{Gravity} for $\star$-product deformations induced by Drinfeld twists.

The reader interested only in levels (i) and (ii) may proceed directly to Section \ref{SBianchi}. These levels require neither a triangular Hopf algebra nor the differential geometry reviewed in Sections \ref{section2}-\ref{sec:connections}, but only a differential calculus on $A$ and a left connection $\std: \Gamma\to \Omega(A)\otimes \Gamma$, with $\Omega(A)$ and $\Gamma$ finitely generated and projective. 
\\

The first sections of this paper provide a concise introduction to the differential geometry of noncommutative algebras $A$
that carry an action of a triangular Hopf algebra $H$ and with noncommutativity
encoded by the universal $R$-matrix $\RR\in H\otimes H$: for all $a,b\in A$, $ab=(\oR^\al\ra b)(\oR_\al\ra a)$, where $\RR^{-1}=\oR^\al\otimes\oR_\al$. For example, noncommutative algebras arising from Drinfeld twist (2-cocycle) deformation of
commutative algebras are of this kind. Another example, that does not
rely on twist deformation, is given by $A$ a cotriangular Hopf
algebra. The theory of algebras, Lie algebras and differential
operators in the general setting of symmetric monoidal categories was outlined in the late '80s in \cite[\S 13.5]{Manin}.  The differential and Cartan calculus was pioneered in \cite{Gurevich}, for a braided derivations approach see \cite{Weber}. 
Here we review and further develop the coordinate-free approach introduced in \cite {LC}, including the Cartan calculus for connections. In \cite {LC} a main application was
a constructive procedure, via a quantum Koszul formula, of the Levi-Civita connection of an arbitrary metric and the associated Einstein in vacuum equations. Here we show that the same geometric framework also provides a natural setting for the study of the Bianchi identities for the curvature and torsion tensors $R\stdd$ and $T\!\stdd$.
\\

We begin in Section 2 recalling basic facts about triangular Hopf
algebras and their modules and $A$-bimodules, like that of vector fields $\Vect(A)$ and the dual module of
one forms $\Omega(A)$.
The differential  and Cartan calculus of the exterior, inner and Lie derivatives  is
presented in Section 3 and is extended in Section 4 to include
covariant derivatives along vector fields, ${\std}_u=\ii_u\circ \std$. Here a key point is that they are a composition of connections acting
from the right (see e.g. \eqref{sum})  with vector fields (inner derivatives $\ii_u$) acting from the left (see e.g.\eqref{braidedleibniz}, \eqref{braidedleibnizii}). 
This is not merely a matter of notation but reflects covariance requirements
with respect to the (generally) non-cocommutative Hopf algebra $H$.
This leads to the Cartan calculus for connections and in turn to show that there is a unique notion of curvature of a connection, independently of its realization as left $A$-linear map ${\stD}^{\!\!\!\!\!2}\,:\Gamma\to\Omega^2(A)\oA \Gamma$, 
or as left $A$-linear map $R\stdd:\Vect^2(A)\oA\Gamma\to\Gamma$. An analogous result holds for torsion.
 We also see that the noncommutativity properties of $A$ and its modules determined by the universal $R$-matrix allow us to lift  arbitrary connections on $A$-modules $\Gamma$ and $\Gamma'$ to their tensor product
$\Gamma\oA\Gamma'$.

In Section \ref{SBianchi} we present the study of the Bianchi identities.
Those for the curvature and torsion tensors $R\stdd$, $T\!\stdd$ follow from the Cartan calculus of covariant derivatives. The Bianchi identity \eqref{due} involves (as it should) only the connection on the module $\Gamma$, however, when $\Gamma=\Vect(A)$, or generally considering also a connection on $\Vect(A)$ and hence the connection on the tensor product module $\Vect(A)\oA\Vect(A)\oA\Gamma$,
we rewrite the Bianchi identity also in terms  of the torsion tensor, see Theorem \ref{BIandT}, which is a noncommutative generalization of the classical formula (see e.g. \cite[Thm. 5.3]{KN}).  We obtain an analogous result for the first Bianchi identity.

\section{Triangular Hopf algebra representations}\label{section2}
In this work modules and algebras are over a field 
 $\Bbbk$ of characteristic zero 
or the ring of formal power series in a variable $\hbar$ over such
field. With slight abuse of notation $\Bbbk$-modules and
$\Bbbk$-module maps will simply be
called linear spaces and linear maps.   The tensor
product over $\Bbbk$ is denoted $\otimes$. Algebras over $\Bbbk$ are
assumed associative and unital. Hopf algebras are assumed with invertible antipode. 

When we have a Lie group $G$ acting on a manifold $M$ the spaces of
vector fields, one forms and their tensor products are bimodules over
$A=C^\infty(M)$ and are representations of $G$. When $A$ is
noncommutative $G$ is replaced by a Hopf algebra $H$ and we consider
representations of $H$ that are also $A$-bimodules. \\

Let $H$ be a Hopf algebra $(H,\mu,\eta,\Delta,\varepsilon, S)$. An
$H$-module is a  linear space $V$ with an
$H$-action $\ra : H\otimes {V}\to {V}$. A linear map $f: V\to
W$ between $H$-modules is $H$-equivariant
if 
\begin{equation}\label{eqn:Hequivariance}
 \hxi\ra f(v) =f(\hxi\ra v) ~,
\end{equation}
for all $\hxi\in H$ and $v\in {V}$.
We denote by  ${}^H\MMM$ the category of $H$-modules. 
 The tensor product $V\otimes W$ of $H$-modules 
 is an $H$-module  with action  $h\ra (v\otimes w):=(h_{(1)} \ra v) \otimes (h_{(2)} \ra w)$,
 where we have used the Sweedler notation $\Delta(h) = h_{(1)}\otimes h_{(2)}$ (with summation understood) for the coproduct of $H$. 

For any $V,W$ in ${}^H\MMM$,  let 
$\hom_\Bbbk(V,W)$ in ${}^H\MMM$ be the linear space
${\rm Hom}_{\Bbbk}(V,W)$ of linear maps  $L : V\to W$ 
 equipped with the  adjoint $H$-action
\begin{equation}\label{adjact}
\RA_{\,} : H \otimes^{} {\hom_\Bbbk}^{}\big({V}, {W}\big) \rightarrow 
 {\hom_\Bbbk}^{}\big({V}, {W}\big)~, ~~
 h\RA L:=h_{(1)}\!\: \ra\,\circ \,L \,\circ\, S(h_{(2)})\ra ~\!, 
\end{equation}
i.e., $(h\RA L)(v)= h_{(1)} \ra (L(S(h_{(2)})\ra v))$. There is another 
$H$-adjoint action
on linear maps
$V\to W$. We denote 
by ${}_\Bbbk\hom(V,W)$ the linear space ${\rm Hom}_{\Bbbk}(V,W)$
with $H$-action $\RA^\cop$ defined by
\begin{equation}\label{copadjact}
\RA_{\,}^\cop : H \otimes^{} {{\,}_\Bbbk\hom}^{}\big({V}, {W}\big) \rightarrow 
 {{}_\Bbbk\hom}^{}\big({V}, {W}\big)~\!, ~~
 \hxi\RA^\cop \tilde L:=\hxi_{(2)}
\ra \, \circ \,\tilde L \,\circ\, S^{-1}(\hxi_{(1)}) \ra ~\!,
\end{equation}
i.e, $
(h\ra^\cop\tilde L)(v)=\hxi_{(2)}
\ra (\tilde L(S^{-1}(\hxi_{(1)}) \ra v))
$.
 While linear maps 
$L\in \hom_\Bbbk(V,W)$ naturally act from the left, indeed the $\RA$ adjoint
action satisfies, for all $\hxi \in H$, $v\in V$, $\hxi\ra
(L(v))=(\hxi_{(1)}\RA L)(\hxi_{(2)}\ra v)$, linear maps 
$\tilde L\in {}_\Bbbk\hom(V,W)$ naturally act from the right, indeed the $\RA^\cop$ adjoint
action satisfies, 
\begin{equation}\label{copadjact2}
\hxi\ra
(\tilde L(v))=(\hxi_{(2)}\RA^\cop \tilde L)(\hxi_{(1)}\ra v)~,
\end{equation}
 that, evaluating
$\tilde L$ on $v$ from  the right, reads $\hxi\ra((v)(\tilde L))=(\hxi_{(1)}\ra
v) (\hxi_{(2)}\RA^\cop \tilde L)$. 
\\

Let now $H$ be a triangular Hopf
algebra with universal 
$\mathcal{R}$-matrix $\mathcal{R}\in H\otimes H$.
We recall that it satisfies
$$
\Delta^{\cop}(\hxi)
=\mathcal{R}\Delta(\hxi)\mathcal{R}^{-1}
\text{ for all }\hxi\in H,
$$
\begin{equation}\label{hexagon}
    (\Delta\otimes\mathrm{id})\mathcal{R}
    =\mathcal{R}_{13}\mathcal{R}_{23}
   ~,~~
    (\mathrm{id}\otimes\Delta)\mathcal{R}
    =\mathcal{R}_{13}\mathcal{R}_{12}
  \end{equation}
  and the triangularity condition ${\mathcal{R}}_{21}={\mathcal{R}}^{-1}$.
For each $H$-module $V,W$, we have the braiding isomorphisms
\begin{equation}\label{taubraiding}
\tau^{}_{V,W}: V \otimes^{} W \longrightarrow W \otimes^{} V ~, \qquad v \otimes^{} w 
\longmapsto 
\big(\bar R^{\alpha} \ra w\big) \otimes^{} \big(\bar R_{\alpha} \ra
v\big)~,
\end{equation}
where we used the notation $\mathcal{R}=R^\alpha\otimes R_\alpha$,
$\mathcal{R}^{-1}=\bar R^\alpha\otimes \bar R_\alpha$.
By a slight abuse of notation we  shall frequently omit the indices in the
isomorphisms $\tau^{}_{V,W}$ and simply write $\tau_{{\mathcal{R}}}$.
Since $H$ is triangular,  $\tau_{W,V}\circ \tau^{}_{V, W}=\mathrm{id}_{V\otimes W}$, the braiding is a symmetry. Hence the induced braidings on tensor powers yield representations of the permutation groups.

A left $H$-module algebra $A$ is an algebra with a compatible
$H$-module structure,
$$
\hxi\rhd(a b)=(\hxi_{(1)}\rhd a)(\hxi_{(2)}\rhd b) ~,~~
\hxi\rhd 1_{A}=\epsilon(\hxi)1_{A}
$$
for all $\hxi\in H$ and $a,b\in {A}$. 
When $H$ is triangular  we consider $\AA$ to be  ${\mathcal{R}}$-commutative (also termed quasi-commutative) if, for all $a,b\in \AA$, 
\begin{equation}\label{bsalgA}
  ab=(\bar R^\alpha\ra b)(\bar R_\alpha \ra a)~.
\end{equation}
We use the terms ${\mathcal{R}}$-commutative and ${\mathcal{R}}$-symmetric in place of the customary ``braided commutative'' and ``braided symmetric'' to emphasize the dependence on the chosen $R$-matrix and because  the induced braiding is a symmetry. This terminology will be consistently used in the paper, therefore braided derivations,  braided commutators, braided Lie algebras will all be termed  ${{\mathcal{R}}}$-derivations,  ${{\mathcal{R}}}$-commutators, ${{\mathcal{R}}}$-Lie algebras. These are derivations, commutators and Lie algebras internal to the symmetric monoidal category of $(H,{{\mathcal{R}}})$-modules.  
\\

Similarly, an $H$-equivariant $A$-bimodule (or
$(H,A)$-module) $V$ is an $H$-module and a compatible $A$-bimodule:
for all $a\in A,v\in V$,
$\hxi\rhd(a v)=(\hxi_{(1)}\rhd a)(\hxi_{(2)}\rhd v) $, 
$\hxi\rhd(v a)=(\hxi_{(1)}\rhd v)(\hxi_{(2)}\rhd a) $.
It is ${\mathcal{R}}$-symmetric if
\begin{equation}\label{bsmodV}
  a\:\!v =
  (\oR^{\al}\ra v )  (\oR_{\al}\ra a)~.
  \end{equation}
We denote by $\cat$ the category of ${\mathcal{R}}$-symmetric
$H$-equivariant $A$-bimodules.
If $V,W$ are modules in $\cat$  the balanced tensor product $V\oA
W$ is in $\cat$
(with obvious left and right
 $A$-actions inherited from those of $V$ and $W$ respectively).
 Furthermore, $\hom_\Bbbk(V,W)$ and ${}_\Bbbk\hom(V,W)$ are $A$-bimodules, the
 first via the left $A$-module structure of $V$
 and $W$, the second via the right $A$-module
 structure of 
 $V$ and $W$: For all $a\in A, v\in V, L\in
 \hom_\Bbbk(V,W), \tilde L\in {}_\Bbbk\hom(V,W)$,
 \begin{equation}
 \label{aLvLav}
 (aL)(v)=a(L(v))~,~~(La)(v)=L(av)~,
 \end{equation}
 \\[-2.6em]\begin{equation}\label{AhomAA}
 (\tilde L a)(v)=\tilde
 L(v)\:\!a~,\,\,\,\,~~(a\tilde L)(v)=\tilde L(va)~.
 \end{equation} 
Let  $\hom_A(V,W)\subset \hom_\Bbbk(V,W)$ and
${}_A\hom(V,W)\subset {}_\Bbbk\hom(V,W)$ be  the $H$-submodules of
right $A$-linear maps: for all $a\in A$, $L(va)=L(v)a$,
and left $A$-linear maps: for all $a\in A$, $\tilde L(av)=a\tilde
L(v)$.
 Then $\hom_A(V,W)\subset \hom_\Bbbk(V,W)$ and
${}_A\hom(V,W)\subset {}_\Bbbk\hom(V,W)$ are $A$-subbimodules
and are modules in $\cat$.  Thus $\cat$
 with the functors $\otimes_A$, $\hom_A$ and ${}_A\hom$
is a symmetric biclosed monoidal category.
\\

Given an  ${\mathcal{R}}$-commutative $A$-bimodule $V$ in $\cat$ the dual module ${}^*V:=
{}_A\hom(V,A)$ is in $\cat$. The evaluation of  elements of  ${}^*V$ on 
elements of $V$ is denoted as the pairing
\begin{equation}
  \label{evaluation<>}
    \langle~,~\rangle : V \otimes_\AA {}^*V\to
    \AA~,~~v\otimes_A {}\omega\mapsto \le v, {}\omega\re~
  \end{equation}
which is well defined on the balanced tensor product $\otimes_A$ because of
the second expression in \eqref{AhomAA}. It is right $A$-linear because of
the first one in \eqref{AhomAA},  left $A$-bilinear and
$H$-equivariant by definition of
${}_\AA\hom(V,A)$. The pairing  $ \langle~,~\rangle : V \otimes_\AA {}^*V\to
    A$ is therefore a morphism in $\cat$.

We shall further consider modules in $\cat$ that are finitely generated
and projective. This means that the pairing \eqref{evaluation<>}
allows for dual bases
$\{e_i\}$ and $\{\omega^i\}$ of elements $e_i\in V$,
$\omega^i\in {}^*V$, $i=1,2\ldots n$ with the property:
for all $v\in V$, $\omega\in {}^*V$,
$$v=\le v ,\omega^i\re
e_i~,~~\omega=\omega^i\le e_i,\omega\re$$
(sum over $i$ understood). Despite the name, the
vectors $e_i$ are in general not independent over $A$, and similarly the $\omega^i$ (unless
$V$ and ${}^*V$ are free modules).

We denote by $\catfp$ the
subcategory of  ${\mathcal{R}}$-commutative $H$-equivariant $A$-bimodules finitely generated
and projective and consider from now on --but the beginning of Section \ref{SBianchi}-- such bimodules. The spaces of
vector fields on $A$, of one forms and their tensor products will all be examples of  modules in $\catfp$. 

\section{Differential and Cartan calculus}\label{sectCC}

Let $A$ be an  ${\mathcal{R}}$-commutative $H$-module algebra, the linear space  $\Vect(A)$ of
 ${\mathcal{R}}$-derivations  is that of linear maps $u\in
\hom_\Bbbk(\AA,\AA)$ that satisfy the ${\mathcal{R}}$-Leibniz rule, for all $a,b\in A$,
\begin{equation}\label{braidedleibniz}
u(ab)=u(a)b\,+\,\oR^\alpha\ra a \,(\oR_\alpha \RA u)(b)~.
\end{equation}
$\Vect(A)$  is an $H$-module (submodule of
$\hom_\Bbbk(\AA,\AA)$): for all $h\in H, u\in \Vect(A)$, $h\ra u$
is still an  ${\mathcal{R}}$-derivation.
As for derivations on a commutative algebra, it is not difficult to see
(cf. \cite[Lemma 3.1]{Weber}) that the  ${\mathcal{R}}$-commutator
$$[~,~]: \Vect(A)\otimes \Vect(A)\to \Vect(A)~,~u\otimes v\mapsto
~[u,v]:=u v-{\oR}^\al\ra v\,{\oR}_\al\ra u~,$$
(where composition of operators is understood) closes in
$\Vect(A)$, is an $H$-equivariant map (for all $h
\in H, u,v, \in  \Vect(A)$, $h\ra [u,v]=[h_{(1)}\ra u,h_{(2)}\ra v]$)
and structures the $H$-module  $\Vect(A)$ as a Lie
 algebra with respect to the triangular Hopf algebra $(H, \RR)$, i.e., we have the  ${\mathcal{R}}$-antisymmetry property and the
  ${\mathcal{R}}$-Jacobi identity, for all $u,v,z\in \Vect(A))$,
\begin{equation}\label{brLie}
[u,v]=-[{\oR}^\al \ra v,{\oR}_\al \ra u]~,~~
[u,[v,z]]=[[u,v],z]+[{\oR}^\al \ra v,[{\oR}_\al\ra u,z]]~.
\end{equation}
 The ${\mathcal{R}}$-Lie algebra of ${\mathcal{R}}$-derivations is furthermore a module in ${}^{H}_\AA\MMM_\AA^\sym$
by defining, for all $a,b\in A$, 
\begin{equation}\label{aub}
(au)(b)=a\,u(b)~~,~~~ua=(\oR^\al\ra a)\oR_\al\ra u~;
\end{equation}
($au$ is an  ${\mathcal{R}}$-derivation because of the  ${\mathcal{R}}$-commutative
property \eqref{bsalgA} of $A$). 
We call $\Vect(A)$  the bimodule
of vector fields.
\\

Let ${\Omega}(A):={}^*\Vect(A)={}_\AA\hom(\Vect(A),
A)$ be the  dual module of left $A$-linear maps $\Vect(A)\to  A$
with $H$-action  defined in \eqref{copadjact} and
$A$-bimodule structure defined in \eqref{AhomAA}. We shall use the
pairing notation \eqref{evaluation<>} and denote the  action on $\Omega(A)$ simply by $\RA$. 
The exterior derivative  $\dd: A\to {\Omega}(A)$  is given by 
\begin{equation}\label{defofd}
  \langle u, \dd a\rangle=u(a)
  \end{equation}
 for all $u\in \Vect(A)$. It is well-defined since  both  $\le\,~,\dd a\re:\Vect(A)\to A$
and $\hat a: \Vect(A)\to A$, $u
\mapsto u(a)$ are left $A$-linear maps. The map
$\dd$ is $H$-equivariant, indeed, for all
$\hxi\in H$ the identities $$\hxi\ra\le u,\dd a\re= \le\hxi_{(1)}\ra u,\hxi_{(2)}\ra \dd
a\re~,~~\hxi\ra (u(a))=(\hxi_{(1)}\ra
u)(\hxi_{(2)}\ra a)
$$
imply $\hxi\ra (\dd\:\!a)=\dd(\hxi\ra a)$.
Next we prove the undeformed Leibniz rule $\dd(ab)=(\dd a)b+a\dd b$:
\begin{align*}\le u,\dd (ab)\re =u(ab)&=u(a)b+\oR^\al\ra a\,(\oR_\al \ra
  u)(b)\\ & =\le u,(\dd a)b\re+(\oR^\al\ra a)\le\oR_\al\ra u,\dd b\re
  \\
                                &=\le u,(\dd a)b\re+\le u , a\dd b\re~,
\end{align*}
where we used \eqref{aub}.
The module of one forms is the submodule of
${\Omega}(A)$ in $\cat$ defined by
$
A\dd A=\{\omega\in {\Omega}(A)~;~\omega=a^j \dd a_j\}
$
for all $a^j,a_j\in A$, with finite sum over the index $j$ understood
(the right $A$-action closes in $\Omega(A)$ due to the Leibniz rule).  
We shall assume that the submodule $A\dd A$  is finitely generated and
projective over $A$ in this case also $\VV\in \catfp$ and one can show that $A\dd
A=\Omega(A)$. Then the $A$-bimodule $\Omega(A)$ and the
exterior derivative $d:A\to \Omega(A)$ constitute a
{\it first order differential
calculus on $A$}. Moreover, an
$H$-equivariant one, since $A$ and $\Omega(A)$ are
in $\catfp$ and $\dd: A\to \Omega(A)$ is $H$-equivariant.
\\

Associated with  $\Omega(\AA)$ and $\Vect(A)$ we have the modules in
$\catfp$:
$\Tau^{p,0}=\Omega(A)^{\otimes_A p}$
and $\Tau^{0,q}=\Vect(A)^{\otimes_A q}$, $p,q\in
\mathbb{N}$, with $\Tau^{0,0}=A$,  and the
graded $H$-module algebras 
of contravariant tensor fields $\Tau^{\bullet,0}=\bigoplus_{p\in\mathbb{N}} \Tau^{p,0}$ and of covariant tensor
fields  $\Tau^{0,\bullet}=\bigoplus_{q\in \mathbb{N}} \Tau^{0,q}$.
We also have
 the graded $H$-module algebra $\Tau^{\bullet,\bullet}=\bigoplus_{p,q\in\mathbb{N}}
 \Tau^{p,q}$ with product that on elements of homogeneous degree is defined by
 \begin{equation}\label{minT} 
 \otimes_{\!\!\:A} : \Tau^{p,q}\otimes \Tau^{p',q'}\to
 \Tau^{p+p'\!,q+q'}\!,\!~~\theta\otimes_{\!\!\:A}\nu\otimes\theta'\otimes_{\!\!\:A}\nu'\mapsto
 \theta\otimes_{\!\!\:A}\oR^\al \ra \theta'\otimes_{\!\!\:A}
 \oR_\al\ra\nu\otimes_{\!\!\:A}\nu'
 \end{equation}
 where $\theta\in \Tau^{p,0}$, $\nu\in \Tau^{0,q}$,  $\theta'\in
 \Tau^{p',0}$, $\nu'\in \Tau^{0,q'}$.
The pairing $\le~,~\re$ can be extended to the morphism in $\catfp$ defined to be trivial if
$r>p$ and otherwise given by
\begin{equation}\label{evTT}
\le~,~\;\re 
: \TT^{0,r} \otimes_\AA\TT^{p,q}\;\to\;
\TT^{p-r,q}~,~~\le\nu,\theta\otimes_A\eta\re:=\le\nu,\theta\re\eta
\end{equation}
for all $\nu\in \Tau^{0,r}$, $\theta\in \Tau^{r,0}$, $\eta\in
\Tau^{p-r,q}$.
In particular, for $r=1$  we obtain the contraction
operator $$\ii:\Vect(A)\to \hom_A(\TT^{p,q},\TT^{p-1,q})~, ~~v\mapsto
\ii_v=\le v,~~\re~.$$ Therefore, the
evaluation in \eqref{evTT} is just the iteration of the contraction operator 
$r$-times:
$\le v_r\otimes_A\ldots v_2\otimes_A
v_1,\eta\re=
\ii_{v_r}\circ \ldots \ii_{v_2}\circ\ii_{v_1}(\eta)$. 
\\

The graded
$H$-module algebra  of exterior forms is by definition $\OM:=\bigoplus_{n\in \mathbb{N\,}}\Omega^{
  n}(A)$. Here $\Omega^0(A)=A,~\Omega^1(A)=\Omega(A),$ and 
$\Omega^n(A)$ is the module of completely  ${\mathcal{R}}$-antisymmetric tensors in
$\TT^{n,0}$, for example
$
\Omega^2(A)=\Omega(A)\wedge \Omega(A)\subset \Omega
 (A)\otimes_\AA \Omega(A)$ is the image of the projector
$P_A=\frac{1}{2}(\id^{\otimes 2}-\tau_{{\mathcal{R}}}): \Omega^{\otimes 2}(A)\to
\Omega^{\otimes 2}(A)~
$
that is $H$-equivariant and $A$-bilinear  (a morphism in $\catfp$). 
The wedge product of one forms thus reads:
\begin{equation}\label{wedgeonforms}
\omega\wedge \omega':=\omega\otimes_\AA
\omega'-{\oR}^\al\ra \omega'\otimes_\AA {\oR}_\al\ra\omega~,
\end{equation}
and is an  ${\mathcal{R}}$-antisymmetric 2-form: $\omega\wedge \omega'=- \oR^\al\ra \omega'\wedge  {\oR}_\al\ra\omega$.
Similarly for forms of higher degree, indeed the construction is as in the classical
case since $\tau_{{\mathcal{R}}}$ provides a representation of the
permutation group. In particular  $\OM$ is equivalently the quotient
of the tensor algebra $\TT^{\bullet, 0}$ by the two-sided ideal
generated by $\ker \,P_A$.
The exterior algebra $\OM$ is generated in degree 0 and 1 and is a
graded  ${\mathcal{R}}$-symmetric $H$-module algebra: for all $\theta\in \Omega^p(A)$, $\theta'\in
\Omega^{p'}(A)$, $\theta\wedge\theta'= (-1)^{pp'}\oR^\al\ra\theta'\wedge
\oR_\al\ra\theta$. 
\\

The contraction operator $\ii:\VV\otimes_A\Omega(A)^{\otimes_{\!\!\:A\!\:} p}\to\Omega(A)^{\otimes_{\!\!\:A\!\:} p-1}$ restricts to the exterior algebra
$\ii:\VV\otimes_A\Omega(A)^{p}\to\Omega(A)^{p-1}$, giving, for all $u\in \Vect(A)$, the graded inner ${\mathcal{R}}$-derivation
$\ii_u:\OM\to\OMmu(A)$, 
\begin{equation}\label{braidedleibnizii}
  \ii_u(\theta\wedge\theta')=\ii_u(\theta)\wedge\theta'+(-1)^{|\theta|}(\oR^\al\ra
\theta)\wedge
\ii_{\oR_\al\ra u}(\theta')~,
\end{equation}
where $|\theta|\in \mathbb{N}$ is the degree of the homogeneous form
$\theta$.
Applying a second inner derivative
 we 
obtain that on exterior forms
\begin{equation}\label{ii}
\ii_u\circ \ii_v+\ii_{\oR^\al\ra v}\circ \ii_{\oR_\al \ra u}=0~.
\end{equation}

We now recall the action of the ${\mathcal{R}}$-Lie algebra of vector fields
$\Vect(A)$ on tensor fields, i.e.,  the Lie derivative. We define $\LLL:\VV\otimes A\to A$, $\LLL_u(a):=u(a)$ 
and  $\LLL:\VV\otimes \VV\to \VV$, $\LLL_u(v):=[u,v]$. 
Since $\xi\ra (\LLL_u(a))=\LLL_{\xi_{(1)}\ra u}(\xi_{(2)}\ra a)$ and
$\xi\ra (\LLL_u(v))=\LLL_{\xi_{(1)}\ra u}(\xi_{(2)}\ra v)$, $\LLL$ is $H$-equivariant 
and compatible with the $A$-bimodule structure of $\VV$; then the extension of the action of $\Vect(A)$ to the tensor
algebra $\Tau^{0,\bullet}$ is well defined
by requiring $\LLL_u$ to be an  ${\mathcal{R}}$-derivation:
$$\LLL_u(\nu\otimes_A\nu')=\LLL_u(\nu)\otimes
\nu'+\oR^\al\ra \nu\otimes_A\LLL_{\oR_\al\ra u}(\nu')~$$
for all $\nu,\nu'\in  \Tau_\RR^{0,\bullet}$. This implies the
commutativity property $\LL_u\circ
\tau^{}_{{\mathcal{R}}}=\tau^{}_{{\mathcal{R}}}\circ\LL_u$ between the  symmetry $\tau_{{\mathcal{R}}}$ and the Lie derivative
operators.
The Lie derivative on contravariant tensor fields
is canonically defined by duality, for all $\nu\in \Tau_\RR^{0,r}$ and
$\theta\in  \Tau_\RR^{r,0}$,
\begin{equation}\label{LonOm} \LLL_u\langle \nu,\theta\rangle=\langle\LLL_u\nu,\theta\rangle
+\langle \oR^\al\ra\nu,\LLL_{\oR_\al\ra u}\theta\rangle
\end{equation} 
i.e., $\langle \nu,\LLL_{u}\theta\rangle:=\LLL_{\oR^\al \ra u}\langle \oR_\al\ra
\nu,\theta\rangle-\langle\LLL_{\oR^\al\ra u}\oR_\al \ra\nu,\theta\rangle
$.
It follows that vector fields acts on the tensor algebra
$\Tau^{\bullet,\bullet}$ as  ${\mathcal{R}}$-derivations. 
On tensor fields  $\Tau^{\bullet,\bullet}$ we have 
\begin{equation}\label{LLL}
\LLL_u\circ \LLL_v-\LLL_{\oR^\al \ra v}\circ \LLL_{\oR_\al \ra
  u}=\LLL_{[u,v]}~;
\end{equation}
this follows from the  ${\mathcal{R}}$-Jacoby identity in \eqref{brLie}, the
 ${\mathcal{R}}$-Leibniz rule and \eqref{LonOm}. Equation \eqref{LLL} shows
that the Lie derivative $\LLL:\VV\otimes\Tau^{\bullet,\bullet}\to  \Tau^{\bullet,\bullet}$ 
is an action of the ${\mathcal{R}}$-Lie algebra of ${\mathcal{R}}$-derivations $\VV$ on  $\Tau^{\bullet,\bullet}$. 
\\

The commutativity property,
$\LLL_u\circ\tau_{{\mathcal{R}}}=\tau_{{\mathcal{R}}}\circ\LLL_u$, and the definition of the 
wedge product in terms of  ${\mathcal{R}}$-antisymmetric tensor products imply that vector fields  also act as  ${\mathcal{R}}$-derivations on the exterior algebra
$\OM$. 
From \eqref{LonOm} it is immediate to compute, for all $u,v\in
\Vect(A)$, $\omega\in \Omega(A)$, 
\begin{equation}\label{Lii}
(\LLL_u\circ \ii_v-\ii_{\oR^\al \ra v}\circ \LLL_{\oR_\al \ra
  u})\omega=\ii_{[u,v]}\omega~.
\end{equation}
Since both left-hand side and right-hand side are  ${\mathcal{R}}$-derivations
on $\OM$, this relation extends to arbitrary exterior forms.
\\

The Lie derivative commutes with the exterior derivative on $A$, for
all $a\in A$, $v\in \Vect(A)$, 
$\LLL_v\dd a=\dd \LLL_v a$, indeed, for all $u \in \Vect(A)$,
\begin{equation*}
  \begin{split}\le u,\LLL_v\dd a\re&=\LL_{\oR^\al\ra v}\le\oR_\al\ra u,\dd
  a\re-\le[\oR^\al\ra v,\oR_\al\ra u],\dd
  a\re \\
 & =\LL_{\oR^\al \ra v}\LL_{\oR_{\al\ra u}} a-\LL_{[\oR^\al\ra v,\oR_\al\ra
    u]}a\\[.2em]
  &=\LL_u\LL_va
  =\le u,\dd\LL_v a\re~.
\end{split}
\end{equation*}
Using induction on the form degree we have that $\dd
\LLL_v\theta=\LLL_v\dd\theta$ for any $\theta\in \OM$.

Similarly, for all $v\in \Vect(A)$,
$$\LLL_v=\ii_v\circ \dd+\dd\circ \ii_v$$ trivially holds on $A$ and  by
induction on the form degree it holds on
$\OM$ since both the right-hand side and the left-hand side are
 ${\mathcal{R}}$-derivations of
$\OM$.
\\

The equations $\LLL_z\circ \dd =\dd\circ \LLL_z$, $\LLL_v=\ii_v\circ \dd+\dd\circ \ii_v $ and  \eqref{ii}, \eqref{LLL} restricted to exterior forms,
\eqref{Lii}, $\dd^2=0$,  constitute the Cartan calculus of the
exterior, Lie and inner derivatives \cite{Gurevich, Weber} that we
summarize in the following

\begin{theorem}[Cartan calculus]
Let ${A}$ be an  ${\mathcal{R}}$-commutative left $H$-module algebra
and consider the associated differential graded algebra
$(\OM,\wedge,\mathrm{d})$. 
The exterior derivative, the Lie derivative and inner derivative along vector fields
$u,v\in \Vect(A)$ are graded  ${\mathcal{R}}$-derivations of
$\OM$ (respectively of degree $1,0,
-1$) that 
 satisfy
\begin{equation*}
\begin{split}
        [\LLL_u,\LLL_v]
        =&\,\LLL_{[u,v]},~~\\
        [\LLL_u,\ii_v]
        =&\,\ii_{[u,v]},\\
        [\LLL_u,\mathrm{d}]=&\,0\,,
\end{split}
\begin{split}
        [\ii_u,\ii_v]=&\,0\,,\\
        [\ii_u,\mathrm{d}]
        =&\,\LLL_u,\\
        [\mathrm{d},\mathrm{d}]=&\,0\,,
\end{split}
\end{equation*}
where 
$[L,L']=L\circ L'-(-1)^{|L||L'|}\oR^\al\ra L'\circ \oR_\al\ra L$
is the graded  ${\mathcal{R}}$-commutator of linear maps $L,L'$  on
$\OM$ of
degree $|L|$ and $|L'|$.
\end{theorem}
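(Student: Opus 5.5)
The theorem collects identities already established in the preceding text, so the plan is to match each graded ${\mathcal{R}}$-commutator to an equation proved earlier. The only new ingredient is a careful reading of the sign and of the $H$-action on operators in the definition $[L,L']=L\circ L'-(-1)^{|L||L'|}\oR^\al\ra L'\circ \oR_\al\ra L$. We use the adjoint action \eqref{adjact} on $\hom_\Bbbk(\OM,\OM)$, and $H$-equivariance of the maps $u\mapsto \LLL_u$, $u\mapsto\ii_u$, together with $h\ra\dd=\varepsilon(h)\dd$ (equivariance of $\dd$). From these, $h\ra\LLL_u=\LLL_{h\ra u}$ and $h\ra \ii_u=\ii_{h\ra u}$. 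Equivariance of the contraction and of the Lie derivative follows from the pairing being a morphism in $\cat$ and from the stated $H$-equivariance of $\LLL$.

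We then check the degrees. $\dd$ has degree $1$, $\LLL_u$ has degree $0$, and $\ii_u$ has degree $-1$. The graded ${\mathcal{R}}$-derivation property for $\ii_u$ is \eqref{braidedleibnizii}, and for $\LLL_u$ it is the statement that vector fields act as ${\mathcal{R}}$-derivations on $\OM$, obtained from $\LLL_u\circ\tau_{\mathcal R}=\tau_{\mathcal R}\circ\LLL_u$. For $\dd$ we use the graded Leibniz rule $\dd(\theta\wedge\theta')=\dd\theta\wedge\theta'+(-1)^{|\theta|}\theta\wedge\dd\theta'$. Since $\dd$ is $H$-invariant, its ${\mathcal{R}}$-Leibniz rule reduces to the ordinary one. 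This holds in degree $0$ by the computation above and extends through the construction of $\dd$ on higher forms.

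With these facts the six identities are read off one by one:
\begin{itemize}
\item $[\LLL_u,\LLL_v]=\LLL_{[u,v]}$ is \eqref{LLL} restricted to forms (degrees $0,0$, so the sign is $+$).
\item $[\LLL_u,\ii_v]=\ii_{[u,v]}$ is \eqref{Lii}, extended to all forms because both sides are ${\mathcal{R}}$-derivations agreeing on generators $A$ and $\dd A$.
\item $[\LLL_u,\dd]=\LLL_u\circ\dd-\dd\circ\LLL_u=0$, since $\oR^\al\ra\dd=\varepsilon(\oR^\al)\dd$ collapses the ${\mathcal{R}}$-twist.
\item $[\ii_u,\ii_v]=\ii_u\ii_v+\ii_{\oR^\al\ra v}\ii_{\oR_\al\ra u}=0$ is \eqref{ii}, the sign being $-(-1)^{1}=+$.
\item $[\ii_u,\dd]=\ii_u\dd+\dd\ii_u=\LLL_u$, again using invariance of $\dd$.
\item $[\dd,\dd]=2\dd^2=0$.
\end{itemize}

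The main obstacle is the extension arguments on generators, and above all $\dd^2=0$ and the Leibniz rule for $\dd$ in higher degree, which the text uses but does not derive. If $\dd$ on $\Omega^n(A)$ is defined by the Koszul-type formula through $\ii$ and $\LLL$, we would first check $\dd^2a=0$ by pairing with $u\wedge v$. This gives $\LLL_u\LLL_v a-\LLL_{\oR^\al\ra v}\LLL_{\oR_\al\ra u}a-\LLL_{[u,v]}a$, which vanishes by \eqref{LLL}. Since $\OM$ is generated by $A$ and $\dd A$, the Leibniz rule then propagates $\dd^2=0$ to all forms.

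The extension principle is the same throughout. A graded ${\mathcal{R}}$-commutator of two graded ${\mathcal{R}}$-derivations is again a graded ${\mathcal{R}}$-derivation, which is a short computation using triangularity $\mathcal R_{21}=\mathcal R^{-1}$ and the quasitriangularity relations \eqref{hexagon}. Such a derivation is therefore determined by its values on $A$ and $\dd A$. For instance, $[\ii_u,\dd]$ and $\LLL_u$ agree on $a$ (both give $u(a)$) and on $\dd a$, since $\dd\ii_u\dd a=\dd u(a)=\LLL_u\dd a$ by the commutation $\LLL_u\dd=\dd\LLL_u$ shown above. The hexagon computation is where I expect the bookkeeping to be most delicate.
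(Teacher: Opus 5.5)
Your proposal is correct and follows essentially the paper's own route. The theorem is a repackaging of the previously established relations \eqref{ii}, \eqref{LLL}, \eqref{Lii}, $\LLL_u\circ\mathrm{d}=\mathrm{d}\circ\LLL_u$, $\LLL_u=\ii_u\circ\mathrm{d}+\mathrm{d}\circ\ii_u$ and $\mathrm{d}^2=0$, each extended from $A$ and $\Omega(A)$ to all of $\OM$ because both sides are graded $\mathcal{R}$-derivations. Your translation of the graded $\mathcal{R}$-commutators via $h\ra\ii_u=\ii_{h\ra u}$, $h\ra\LLL_u=\LLL_{h\ra u}$ and $h\ra\mathrm{d}=\varepsilon(h)\mathrm{d}$ simply makes explicit the sign and $R$-matrix bookkeeping that the paper leaves implicit.
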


Examples of differential and Cartan calculi according to this construction are those on a cotriangular Hopf algebra $K$ (dual to
$H$), in this case the calculus is a bicovariant differential
calculus \`a la Woronowicz \cite{Wor}. It is fixed by the triangular
structure of $K$ and turns out to be that 
defined in \cite[\S 4.3]{Gomez-Majid}.
Another class of examples  \cite{Gravity, Weber} arises via
Drinfeld twist deformations of the differential and Cartan
calculus  on $G$-manifolds $M$, with $G$ a Lie group.  The Drinfeld two
cocycle is associated with the universal enveloping algebra of the Lie
algebra of $G$.

\section{Connections, curvature and torsion}
\label{sec:connections}
We extend the noncommutative Cartan calculus to connections by presenting their Cartan formula involving the inner derivatives. This allows us to unify different definitions of curvature and torsion. We treat left and right connections on an equal footing and then focus on left connections, which are  studied further in the subsequent sections.

\subsection{Connections}\label{connections}
  Let $H$ be a triangular Hopf algebra, $A$ be an
  ${\mathcal{R}}$- commutative $H$-module algebra and
  $\bigl(\OM,\dd\bigr)$ the associated
  differential graded algebra 
  constructed in Section \ref{sectCC}.  
A {\it right connection} on a module $\Gamma$ in $\cat$ 
is a $\Bbbk$-linear map 
\begin{equation} \dst : \Gamma \rightarrow
\Gamma \otimes_A \Omega(A)
\end{equation} 
in $\hom_\Bbbk(\Gamma,\Gamma\oA \Omega(A))$,
which satisfies the Leibniz rule, for all
$s\in\Gamma$, $a\in A$, 
\begin{equation} 
\label{prop_Delta} 
\dst(s a) =
\dst(s) \:\!a+ s\otimes_A\dd a ~.
\end{equation}  
A {\it left connection} on $\Gamma$ is a $\Bbbk$-linear map
\begin{equation} \std : \Gamma \rightarrow
\Omega(A)\otimes_A\Gamma
\end{equation} 
in ${}_\Bbbk\hom(\Gamma,\Omega(A)\oA\Gamma)$,
which satisfies the Leibniz rule,
\begin{equation}  \label{stdasleib}
\std(a s) =
\dd a\otimes_A s+ a\std(s)~.
\end{equation}
We denote by $\Con_A(\Gamma)$ and ${}_A\Con(\Gamma)$ the set of all
right, respectively left connections.

The 
$H$-adjoint action  \eqref{adjact} on $\dst\in \Con_A(\Gamma)\subset
\hom_\Bbbk(\Gamma,\Gamma\otimes_A\Omega(A))$, reads, for all $\hxi\in H$,
$
 \hxi\RA\dst := \hxi_1\ra\,\circ \dst \circ S(\hxi_2)\ra
$.
This linear map  is easily seen to satisfy
(cf. \cite[\S 6.2]{LC}),
for all $s\in \Gamma$ and $a\in A$,
\begin{equation}\label{hdsa} (\hxi\RA\dst\,) (sa)=
(\hxi\RA \dst\,) (s)\;\! a\, +\,s\otimes_A  \varepsilon(\hxi)\dif a~.
\end{equation}
In particular we see that if $\varepsilon(\hxi)=0$ then
$\hxi\RA \dst \in\hom_A(\Gamma,\Gamma\otimes_A \Omega(A))$, while if
$\varepsilon(\hxi)=1$ then $\hxi\RA\dst\in\Con_A(\Gamma)$.
Similarly for left connections, the $H$-action reads (cf. \eqref{copadjact})
for all $\hxi\in H$, 
$ \hxi\RA^\cop \std:=\hxi_{(2)}
\ra \, \circ \, \std \,\circ\, S^{-1}(\hxi_{(1)}) \ra$ and the linear map $\hxi\RA^\cop \std$
satisfies
\begin{equation}\label{genleftconn}
({h_{}\RA^{\!\:\cop\:}}\std)(as):=\varepsilon(h)\dd a\otimes_\AA s+a 
({h\RA^\cop \std})( s)~. 
\end{equation}
Using these actions and the  ${\mathcal{R}}$-commutativity of the $A$-bimodule $\Gamma$,  a right connection
$\dst$ on $\Gamma$ is shown to be also a  ${\mathcal{R}}$-left connection,
cf. \cite[Prop. 6.8]{LC}, and similarly 
a left connection $\std$ on $\Gamma$ is also a  ${\mathcal{R}}$-right
connection, for all $a\in A, s\in \Gamma$, 
\begin{equation}\label{braidedconn}
\begin{split}
~~~~~~~~~~~~~
\end{split}
\begin{split}
\dst(a s)&=(\oR^\al\ra a) (\oR_\al\ra 
\dst)(s)+\oR^\al\ra s \otimes_A \oR_\al\ra \dif a~,\\
\std(s a)&= (\oR^\al\RA^\cop
\dst)(s) (\oR_\al\ra a)+\oR^\al\ra \dif a\otimes_A\oR_\al\ra s ~.
\end{split}
\end{equation}
If $\dst$ is $H$-equivariant we have $\dst(a s)= a
\dst(s)+\oR^\al\ra s\otimes_A \oR_\al\ra\dif a
$, and thus recover the notion of bimodule connection.
\\

Given left connections $\std$ on $\Gamma$ and $\widehat\std$ on $\widehat \Gamma$ we define a connection on the tensor product module $\Gamma\otimes_A \widehat\Gamma$ (that with slight abuse of notation we still denote $\std$ rather than $\std\oplus \widehat\std$) via the  ${\mathcal{R}}$-Leibniz rule
\begin{equation}\label{sum}
  \begin{split}
    \std\,
  :\,\Gamma\otimes_A\widehat\Gamma\,&\rightarrow\, \Omega(A)\otimes_\AA \Gamma\otimes_A\widehat\Gamma~,~\\
 s\otimes_\AA \hat s\, &\mapsto\std(s\otimes_\AA\hat s):=(\oR^\al\RA^\cop\,\std)(s) 
                                  \otimes_A(\oR_\al\ra \hat
                                 s)+\tau_{12}\circ(s\otimes_A\widehat\std\hat
                                 s)~,~
                               \end{split}
                             \end{equation}
where 
$\tau_{12}:\Gamma\otimes_A\Omega(A)\otimes_A\widehat\Gamma\rightarrow
\Omega(A)\oA\Gamma\otimes_A\widehat\Gamma$ is the symmetry
isomorphism $\tau_{{\mathcal{R}}}\otimes\id$. 
Due to property \eqref{braidedconn} this definition is
well defined, i.e., it is independent of the
representative chosen in $\Gamma\otimes\widehat\Gamma$ for the balanced tensor product 
$\Gamma\otimes_A\widehat \Gamma$ (e.g. $s a\otimes \hat s$ versus
$s\otimes a\hat s$). The map in \eqref{sum} transforms according to the  $H$-adjoint
action  $\ra^\cop$ so that it is  in
${}_\Bbbk\hom(\Gamma\otimes_A\widehat\Gamma,\Omega(A)\otimes_A\Gamma\otimes_A\widehat\Gamma)$. Finally
it is a connection because it satisfies the Leibniz rule. By iteration, given three left connections $\std, \widehat\std$, $\std'$ we obtain a connection on the tensor product  $\Gamma\otimes \widehat\Gamma\otimes\Gamma'$ of the three left $A$-modules. This connection  is unique (we have associativity of the sum of connections $\std\oplus \widehat\std\oplus \std'$). Similar results apply to right connections.
\\

The connections $\dst\in\Con_A(\Gamma)$ and $\std\in{}_A\Con(\Gamma)$
also extend to connections 
$\Dst\in \hom_\Bbbk(\Gamma\otimes_\AA \OM, \Gamma\otimes_\AA
\Omega^{\bullet +1}(A))$ and $\stD\in {}_\Bbbk\hom(\OM\otimes_\AA \Gamma,
\Omega^{\bullet +1}(A)\otimes_\AA\Gamma)$ well-defined by
\begin{equation}
    \label{dOMdef}
\Dst: \Gamma\otimes_\AA \OM \longrightarrow  \Gamma\otimes_\AA \Omega^{\bullet+1\!}(A)~,~~~
\Dst(s\otimes_\AA 
\theta):=\dst(s)\wedge\theta+s\otimes_\AA \dd\theta~,~~~~~
\end{equation}
and, for all $k\in \mathbb{N}$,\begin{equation}
  \label{defofdnabla}
\stD:  \Omega^{k}(A)\otimes_\AA \Gamma \longrightarrow  \Omega^{k+1\!}(A)
\otimes_\AA\Gamma~,~~~
\stD(\theta\otimes_\AA 
s):=\dd\theta\otimes_\AA s+(-1)^k\theta \wedge {\std}( s)~.
\end{equation} The Leibniz rule follows: for all $\varsigma\in
\Gamma\otimes_\AA\Omega^k(A)$, $\vartheta\in\OM$
and for all $\sigma\in
\OM\otimes_\AA\Gamma$ and
$\theta\in \Omega^k(A)$, 
\begin{equation}\label{dleibOM}
\Dst(\varsigma\wedge \vartheta)= \Dst_{}\varsigma\wedge 
\vartheta+(-1)^k\varsigma\wedge \dd\vartheta~~,~~~
\stD(\theta\wedge \sigma)= \dd\theta\wedge 
\sigma+(-1)^k\theta\wedge \stD\sigma~.
\end{equation}
The definitions in \eqref{dOMdef}, \eqref{defofdnabla} are  well-defined because  independent of the
representative chosen for the balanced tensor product over $A$; for the proof one can use for example
equations \eqref{prop_Delta}, \eqref{stdasleib}, \eqref{hdsa}. 

For later use we further observe that the  $H$-action is easily seen to satisfy, for all $h\in H$, 
\begin{equation}\label{dhnabla}
  h\RA \Dst=\dd^{\phantom{J_i}}_{h\RA\!}\ddst~,~~
  h\RA^\cop \stD=\dd^{\phantom{J_J}}_{_{}h_{}\RA^{\!\:\tiny{\cop}}}\stdd\,,
  \end{equation}
where $ h\RA \Dst$ and $\dd^{\phantom{J_J}}_{_{}h_{}\RA^{\!\:\tiny{\cop}}}\stdd$ are defined as in \eqref{dOMdef} and \eqref{defofdnabla}. 
\\

We extend the inner derivative $\ii_u:\OM\to\OMmu(A))$ to 
$\OM\otimes_A \Gamma$
by, for all $u\in
\Vect(A)$, 
\begin{equation}\label{defcontraction}
\ii_u: \OM \otimes_\AA\Gamma\to
\OMmu(A)\otimes_\AA\Gamma~,~~\theta\otimes_A s\mapsto\ii_u(\theta\otimes_\AA s)=\ii_u(\theta)\otimes_\AA s~.
\end{equation}
This allows to define the {\it covariant derivative} of a left connection
 along a vector field $u\in
\Vect(A)$. It is the linear operator of zero degree $\stD{}_{\mbox{$\!{}^{}_u$}}: \OM\otimes_\AA\Gamma\to \OM\otimes_\AA\Gamma$ defined by 
\begin{equation}\label{defCD}
\stD{}_{\mbox{$\!{}^{}_u$}}:=\ii_u\circ \stD+\stD
\circ \ii_u~,
\end{equation}
in particular on $\Gamma$ we have $\stD{}_{\mbox{$\!{}^{}_u$}}=\ii_u\circ \std$
 that, as usual, we 
denote by $\std{}_u$. The key property of $\std{}_u$ and
$\stD{}_{\mbox{$\!{}^{}_u$}}$ is that they are the composition of a
linear map $\std$ acting from the right and a linear
map $\ii_u$ acting from the left. This implies the
 ${\mathcal{R}}$-Leibniz rule, for all
$\theta\in \OM$, $\sigma\in\OM\otimes_\AA\Gamma$
\begin{equation}
\stDu (\theta\wedge\sigma)=\LLL_u(\theta)\wedge\sigma+(\oR^\al\ra\theta)\wedge \stDalu(\sigma)~.
\end{equation}
In turn this and the Cartan relation
$[\LLL_u,\ii_v]=\,\ii_{[u,v]}$, i.e., $[\ii_u,\LLL_v]=\,\ii_{[u,v]}$
imply the Cartan relation
\begin{equation}\label{CRID}
\stDu\circ \ii_v -\ii_{\oR^\al\ra v} \circ \stDalu=\ii_{[u,v]}~~~~\mbox{
  i.e., }~~~~ \ii_u \circ \stDv -\stDalpv\circ \ii_{\oR_\al\ra u} =\ii_{[u,v]}~.
\end{equation}
In this expression the connection $\stD$ is not $H$-equivariant, but since
it is a
left connection, it acts from right to left and does not braid with the vector field, which acts from left to right; the  symmetry  $\tau_{{\mathcal{R}}}$ acts nontrivially only on the vector fields
$u$ and $v$, as in $[\LLL_u,\ii_v]=\ii_{[u,v]}$.\\

The connection on tensor product of modules $\Gamma\otimes_A\Gamma$ as defined in \eqref{sum} induces a covariant derivative on tensor product of modules. For later use we give the explicit expression (see Corollary 4.10 in \cite{LC}).
For any  vector field  $u\in
\Vect(A)$ the covariant derivative
$\std{}_u:\Gamma\to \Gamma$ lifts to 
$\std{}_u:\Gamma\otimes\Gamma\to \Gamma\otimes\Gamma$ via the  ${\mathcal{R}}$-Leibniz
rule
\begin{equation}\label{qleib}
\begin{split}
\std{}_u(v\otimes_A z)&=({\oR}^{\al\!}\ra^\cop\std)^{}_u v \oA \oR{}_{\al\!}\ra\!
z\;+\; \oR^{\al\!} \ra\!\!\: v\otimes_A\std{}_{\oR_\al \ra u} z\\[.4em]
&={\oR}^\al\!\ra\!\!\; (\std{}_{\oR_\beta\ra u}\oR_\gamma\ra v)
\oA \oR_\al\oR^{\beta\!\!\;}\oR^{\gamma\!} \ra \! z
\;+\; \oR^{\al\!} \ra\!\!\; v\otimes_A\std{}_{\oR_\al \ra u} z
\end{split}
\end{equation}
where $({\oR}^\al\ra^\cop\std)_u:=\ii_u\circ ({\oR}^\al\ra^\cop\std)$. The second equality follows form the triangularity property of the $R$-matrix.

\subsection{Curvature}
The curvature of the connection $\std\in
{}_A\Con(\Gamma)$ is  defined by
\begin{equation}
{\stD}^{\!\!\!\!\!2}\:=\stD\circ \stD~.~~~
\end{equation}
This is a left $\OM$-linear map 
in ${}_{\OM}\hom(\OM\otimes_A\Gamma, \OMP2\otimes_A\Gamma)$.
For example to prove that 
for all $\theta\in \Omega^k(A),
\sigma\in\OM\otimes_A \Gamma $, 
${\stD}^{\!\!\!\!\!2}\,(\theta\wedge\sigma )=
\theta\wedge{\stD}^{\!\!\!\!\!2}\:\sigma$,  just use twice \eqref{dleibOM}.

We also define the curvature $R\stdd$ of a left connection to be,
for all $u,v\in \Vect(A)$ and $s\in \Gamma$,
\begin{equation}
R\stdd(u,v,s):=
-\,\ii_ u\circ \ii_v\circ {\stD}^{\!\!\!\!\!2}\;(s)~.
\end{equation}
We further have 
\begin{equation}\begin{split}
R\stdd(u,v,s)&=-\,\ii_ u\circ \ii_v\circ {\stD}^{\!\!\!\!\!2}\;(s)=
-\,\ii_ u\le v\,,{\stD}^{\!\!\!\!\!2}\;s\re=-\le u\,,\le v, {\stD}^{\!\!\!\!\!2}\;s\re\re=
-\le
  u\otimes_A v, {\stD}^{\!\!\!\!\!2}\;s\re\\
&=-\frac{1}{2}\le u\wedge
  v, {\stD}^{\!\!\!\!\!2}\; s\re~,
\end{split}
\end{equation}
where in the last equality we used that $\Vect(A)\otimes_A\Vect(A)$ is
the direct sum of  ${\mathcal{R}}$-antisymmetric plus  ${\mathcal{R}}$-symmetric vector fields,
and that these latter have vanishing pairing with 2-forms. 
This implies that $R\stdd$ is a tensor field in
${}_A\hom(\Vect^2(A)\otimes_A\Gamma,\Gamma)$, where $ \Vect^2(A)=\VV
\wedge\VV$.
The equivalence of $R\stdd\in {}_A\hom(\Vect^2(A)\otimes_A\Gamma,\Gamma)$ with
${\stD}^{\!\!\!\!\!2}\in  {}_{A}\hom(\Gamma, \Omega^2(A)\otimes_A\Gamma)$ follows from the equivalence
$${}_A\hom(\Gamma,\Omega^2(A)\otimes_A\Gamma)\simeq {}^*\Gamma\otimes_A\Omega^2(A)\otimes_A\Gamma\simeq {}^*(\Vect^2(A)\otimes_A\Gamma)\otimes_A\Gamma\simeq{}_A\hom(\Vect^2(A)\otimes_A\Gamma,\Gamma)$$
as finitely generated projective  modules in $\catfp$ (recall $\Omega(A)={}^*\Vect(A)={}_A\hom(\VV,A)$).

We now use the Cartan relations to prove the explicit expression for $R\stdd$,
\begin{equation}\label{curvatureuvs}
R\stdd(u,v,s)=(\std{}_u\circ
\std{}_v-\std{}_{\oR^\al\ra v}\circ \std{}_{\oR_\al\ra u}-\std{}_{[u,v]})(s)~.
\end{equation}
Indeed, 
\begin{equation*}\begin{aligned}
R\stdd(u,v,s)=-\ii_ u\circ \ii_v\circ {\stD}^{\!\!\!\!\!2}\;(s)&=\,-\ii_ u\big(
                                                    \ii_{v}\circ
                                                    {\stD}(\std
                                                    s)\big)=-\ii_u \big(\stDv(\std
                                                    s)-\stD(\std{}_v
                                                    s)\big)\\[.2em]
&=-\ii_{[u,v]}(\std s)-\stDalpv (\ii_{\oR_\al\ra u}\std
  s)+\std{}_u(\std{}_v s)\\[.2em]
&=\std{}_u\circ
\std{}_vs-\std{}_{\oR^\al\ra v}\circ \std{}_{\oR_\al\ra
  u}s-\std{}_{[u,v]}s
\end{aligned}
\end{equation*}
where in the second equality we added and subtracted $\stD\circ
\ii_v$, in the third we used the second of the Cartan relations in \eqref{CRID}.

\subsection{Torsion}
Let now  $\Gamma=\Vect(A)$, since it  is a finitely
generated and projective $A$-bimodule we consider the
canonical $H$-invariant element
$I\in \Omega(A)\otimes\Vect(A)$, corresponding to the identity map $\Vect(A)\to \Vect(A)$, that is, $u\mapsto \langle u, I\rangle =u$. Using dual bases  $\{\omega^j\}$,  $\{e_j\}$ for
$\Omega(A)={}^*\Vect(A)$ and $\VV$ we have $I= \omega^j\otimes e_j$.
The torsion 2-form of a connection $\std$ is the tensor field
$$
\stD(I)\in \Omega^2(A)\otimes_A \Vect(A)~.
$$
We also define the torsion $T\stdd$ of a left connection to be,
  for all $u,v\in \Vect(A)$,
\begin{equation}
 T\stdd(u,v)=
 -\,\ii_ u\circ \ii_v\circ {\stD}\,(I)~.
\end{equation}
The equality 
$$T\stdd(u,v)=-\ii_ u\circ \ii_v\circ{\stD}\,(I)=-\ii_ u\le v,
{\stD}\,I\re=-\le u\otimes_A v,{\stD}\,I\re=-\frac{1}{2}\le
u\wedge v,{\stD}\,I\re~$$ 
shows that $T\stdd$ is a well-defined map $\Vect^2(A)\to
\Vect(A)$ and furthermore that $T\stdd\in
{}_A\hom(\Vect^2(A),\Vect(A))$.
The equivalence of $\stD(I)$ with $T\stdd$ then follows from the isomorphism 
$\Omega^2(A)\otimes_A \Vect(A)\simeq{}_A\hom(\Vect^2(A),\Vect(A))$ as finitely generated projective  modules in $\catfp$.

The explicit expression of  the torsion $T\stdd$ is
\begin{equation}\label{Torsion-uv}
T\stdd(u,v)=\std{}_uv-\std{}_{\oR^\al\ra v}\:\!{\oR_\al\ra u}\,-\:\!{[u,v]}~.
\end{equation}
Indeed, 
\begin{align}{}
T\stdd(u,v)=-\ii_ u\circ \ii_v\circ {\stD}\,(I)&=
-\ii_ u\circ \stDv\,(I)+ \ii_u\circ {\stD}\circ  \ii_v \,(I)\nonumber \\ 
&=-\ii_{[u,v]}(I)-\stDalpv \circ \ii_{\oR_\al\ra u} (I)+ \ii_u\circ 
  {\stD}(v)\nonumber\\
& ={[u,v]}+\stDalpv (\oR_\al\ra u) - \std{}_uv
\end{align}
where in the second line we used the second Cartan relations in \eqref{CRID}.
\\

We define more in general $T_{_{}h_{}\RA^{\!\:\cop}}\stdd(u,v):=-\ii_u\circ\ii_v\circ \dd^{\phantom{J_J}}_{_{}h_{}\RA^{\!\:\cop}}\stdd(I)$ which by construction is  a tensor in $
{}_A\hom(\Vect^2(A),\Vect(A))$. We notice that the second equality in \eqref{dhnabla} and $H$-invariance of the canonical element $I$  imply
$h\RA^\cop T\!\stdd =
T^{\phantom{J\!}}_{h{{{\,\RA^{\!\: \cop\!\:}}}}}\stdd$. We shall later use the following equality
\begin{equation}\label{hT}
  \begin{split}
    T^{}_{h_{}\RA^{\!\:\cop}}\stdd(u,v)&=(h \RA^\cop T\!\stdd)(u,v)=\hxi_{(3)}\RA\big(T\stdd\big(S^{-1}(\hxi_{(2)})\RA u,S^{-1}(\hxi_{(1)})\RA v\big)\big)\\
                                       &=\hxi_{(3)}\RA\Big(\std{}_{S^{-1}(\hxi_{(2)})\RA u}\,{S^{-1}(\hxi_{(1)})\RA v}\\&~~~~-\std{}_{\oR^\al\ra (S^{-1}(\hxi_{(1)})\RA v)}\:\!{\oR_\al\ra (S^{-1}(\hxi_{(2)})\RA u)}\,-\:\!{[S^{-1}(\hxi_{(2)})\RA u, S^{-1}(\hxi_{(1)})\RA v]}\Big)~~~~~~~~\\
    &=(h \RA^\cop\std){}_uv-(h \RA^\cop\std){}_{\oR^\al\ra v}\:\!{\oR_\al\ra u}\,-\:\!{\varepsilon(h)[u,v]}~,
\end{split}
\end{equation} 
where $(h \RA^\cop\std){}_u=\ii_u\circ (h \RA^\cop\std)$.

Analogous results holds for curvatures and torsions of right connections.

\section{Bianchi identities}\label{SBianchi}

We study the Cartan structure equation and the Bianchi identity in the context of a left connection on a generic left $A$-module $\Gamma$.   Only in the case $\Gamma=\Vect(A)$ we call these second Cartan structure equation and second (or differential) Bianchi identity. When $\Gamma=\Vect(A)$ we also study the first Cartan structure equation and the first Bianchi identity, which involve the torsion of the connection.

Cartan structure equations and Bianchi identities arise in the context of principal bundles. They are frequently  stated using local trivializations so that they become a relation between the curvature two-form coefficients and the connection one-form coefficients, with the first Cartan and Bianchi relations including also the torsion two-form coefficients. These coefficients are matrix valued functions on the base space and can be equivalently considered as local expressions associated to curvature and torsion on a vector bundle.  

In the algebraic approach to connections on projective modules we are pursuing, the Cartan structure equations are encoded in the very definition of curvature and torsion as the square of the connection and as the connection evaluated on the tautological one-form $I\in \Omega(A)\oA\Vect(A)$ corresponding to the identity map $\Vect(A)\to \Vect(A)$. We show that the Bianchi identities then follow immediately from the associativity of the composition of connections.  This initial study holds for a general projective $A$-module with a differential graded algebra $(\Omega^\bullet(A), \dd)$ and an arbitrary connection. It can be found in \cite[Part II, \S 2]{Connes}, \cite[\S 7 \& 8]{Landi},  \cite{QB_BM} and \cite[\S 4]{BM}.

We here further study in this general context the one-form and two-form coefficient expressions of the Bianchi identities using projectivity of the module (and therefore dual bases) and thus avoiding local trivializations, these results are compatible with and complement those in \cite[\S 5.2]{LC}.

Next we present an equivalent formulation of the Bianchi identities for the curvature and torsion tensors $R\stdd$, $T\stdd$.
In this case  $A$ is an  ${\mathcal{R}}$-commutative $H$-module algebra as in Section \ref{section2}, with the canonical differential calculus of Section \ref{sectCC} and finitely generated projective modules in $\catfp$. These Bianchi identities are obtained using the Cartan calculus for connections and the sum of connections (connections on tensor product modules) studied in Section \ref{connections}.

\subsubsection*{Bianchi identity}

Let  $(\OM,\dd)$ be a graded differential algebra over an algebra $A$, with $\Omega^0(A)=A$ and $\Omega(A):=\Omega^1(A)$. 
Given left $A$-modules $\Gamma$ and $\Gamma'$, a left $A$-linear map  $\tilde L\in {}_A\hom(\Gamma,\Gamma')$ and left connections $\std:\Gamma\to \Omega(A)\otimes_A\Gamma$ and
$\std':\Gamma'\to \Omega(A)\otimes_A\Gamma'$, we define its covariant derivative by
\begin{equation}\label{Bnabla}
\Bdst\tilde L:=\std'\circ \tilde L- (\id\otimes {\tilde L})\circ \std~.
\end{equation}
This is a left $A$-linear map in ${}_A\hom(\Gamma,\Omega(A)\otimes_A\Gamma')$, indeed, 
for all $a\in A$, $s\in \Gamma$,
\begin{equation}
  \begin{split}
  (\Bdst\tilde L)(as)&=
\std'(a\tilde L(s))-(\id\otimes \tilde L) \std(as)\\ &=
\dd a\otimes_A \tilde L(s) + a\std'(\tilde L(s)) -\dd a\otimes_A \tilde L(s)-a (\id\otimes \tilde L)(\std(s))\\ &=a\Bdst\tilde L(s)~.\end{split}\end{equation}
\begin{remark}
Unlike the commutative case, one can show that when $A$ is ${\mathcal{R}}$-commutative and $(\OM, \dd)$ is the canonical differential calculus on $A$ with $\Gamma,\Gamma'\in \catfp$,  the linear map $\Bdst$ defines a left connection 
${}_A\hom(\Gamma,\Gamma')\to \Omega(A)\otimes_A {}_A\hom(\Gamma,\Gamma')$  on the left $A$-module ${}_A\hom(\Gamma,\Gamma')$ of left $A$-linear maps only when
$\std$ is $H$-equivariant. In this case $\std$ is a bimodule connection. 
 \end{remark}

Similarly to \eqref{Bnabla}, where  $\id\otimes {\tilde L}$ can be seen as the left $\OM$-linear lift of $\tilde L$ to $\OM\oA\Gamma$, for a left $\OM$-linear map  $\tilde L\in
{}_\OM\hom(\OM\otimes_A\Gamma,\Omega^{\bullet +n}(A)\otimes_A\Gamma)$ of degree $n$
we define its covariant derivative by 
\begin{equation}\label{CDO}
  \BDst(\tilde L):=\stD{}_{'}\circ \tilde L-(-1)^n\tilde L\circ \stD~.
  \end{equation}
It is easy to see that it is a left $\OM$-linear map of degree $n+1$.
\\

As a special case we can consider the curvature ${\stD}^{\!\!\!\!\!2}\in {}_{\OM}\hom(\OM\otimes_A\Gamma, \OMP2\otimes_A\Gamma)$ of a left connection $\std:\Gamma\to \Omega(A)\otimes_A\Gamma$. The differential Bianchi identity for the curvature ${\stD}^{\!\!\!\!\!2}\:$ is the
vanishing of its covariant derivative, 
\begin{equation}\label{Bianchi2} \BDst({\stD}^{\!\!\!\!\!2}\:)=\stD\circ{\stD}^{\!\!\!\!\!2}\;-\,{\stD}^{\!\!\!\!\!2\,}\circ  \stD=0\,,
\end{equation}
that immediately holds because of the associativity of the composition of maps.
Similarly, the Cartan structure equation in this vector bundle approach to connections is the very definition of curvature as the square  of the connection, ${\stD}^{\!\!\!\!\!2}\,=\stD\circ \stD$. 
\\

These identities in commutative differential geometry are frequently stated using a local trivialization (frame) of the vector bundle whose module of sections is $\Gamma$. 
Then the connection and curvature are represented by matrix-valued one- and two-forms, respectively, corresponding to local expressions for the connection and curvature on the underlying principal bundle. Since $\Gamma$ is a finitely generated and projective left $A$-module one can adopt a global approach, much better suited to noncommutative geometry than a local one, while still describing the connection and curvature in terms of matrix valued one- and two-forms. Recall that a left $A$-module $\Gamma$ is finitely generated and projective if and only if there exists a finite subset $\{s_i\}, i=1,...m$ of $\Gamma$ that $A$-linearly generates $\Gamma$ and similarly a finite generating subset $\{{}^*s_i\}$ of the dual right $A$-module ${}^*\Gamma={}_\AA\hom(\Gamma,A)$ such that for all $s\in \Gamma$, $\le s,{}^*s^i\re s_i=s$, sum over repeated indices understood.
The pair  $\{s_i\}$, $\{{}^*s_i\}$ is loosely called dual bases even if no $A$-linear independence is assumed among the elements of $\{s_i\}$ and the elements of $\{{}^*s_i\}$ respectively.
Subordinate to dual bases we define the connection one-form coefficients $A_i{}^j$,  and the curvature two-form coefficients $R_i{}^j$ via 
\begin{equation}\label{ARcoeff}
  A_i{}^j:=\le \std s_i,{}^*s^j\re~,~~R_i{}^j:=\le {\stD}^{\!\!\!\!\!2} \;s_i,{}^*s^j\re
  \end{equation}
  so that
  \begin{equation}
    \std s_i=A_i{}^j\oA s_j~,~~{\stD}^{\!\!\!\!\!2}\; s_i=R_i{}^j\oA s_j
  \end{equation}
  (these coefficients equal the $-\omega_i{}^j+\dd\le s_i,{}^*s^j\re$ and the $\Rsf_i{}^j$ ones defined in \cite{LC}, end of Sec. 5). Pairing these expressions with ${}^{*\!}s^\ell$ and using \eqref{ARcoeff} we see that
  \begin{equation}\label{ALEREA}
    A_i{}^k\le s_k,{}^*s^\ell\re=A_i{}^\ell~~,~~~R_i{}^k\le s_k,{}^*s^\ell\re = R_i{}^\ell~.
    \end{equation}
    We now evaluate the Cartan structure equation ${\stD}^{\!\!\!\!\!2}\,=\stD\circ \stD$ on $s_i$ and pair the result with ${}^* s^j$ to obtain its coefficient expression (recall \eqref{defofdnabla})
    \begin{equation}
      \begin{split}
        R_i{}^j&=\le\stD(A_i{}^k\oA s_k),{}^*s^j\re=\le \dd A_i{}^\ell\oA s_\ell-A_i{}^k\wedge A_k{}^\ell\oA s_\ell\,,{}^*s^j\re\\ &= \dd A_i{}^\ell \,\le s_\ell, {}^*s^j\re -A_i{}^k\wedge A_k{}^j~.
      \end{split}
    \end{equation}
    Similarly, we evaluate the Bianchi identity $\stD\circ{\stD}^{\!\!\!\!\!2}\;-\,{\stD}^{\!\!\!\!\!2\,}\circ  \stD=0$
    on $s_i$ and pair the result with ${}^* s^j$ to obtain (recall the left $\OM$-linearty of ${\stD}^{\!\!\!\!\!2}\;$)
     \begin{equation*}
      \begin{split}
        0&=\le \stD(R_i{}^\ell\oA s_\ell) - {\stD}^{\!\!\!\!\!2}\;(A_i{}^\ell\oA s_\ell),{}^*s^j\re\\ &=\le \dd R_i{}^\ell\oA s_\ell+R_i{}^\ell\wedge A_\ell{}^k\oA s_k-A_i{}^\ell\wedge R_\ell{}^k\oA s_k\,,{}^*s^j\re
      \end{split}
    \end{equation*}
    that is, the Bianchi identity,
    \begin{equation}
     \dd R_i{}^\ell\,\le s_\ell,{}^*s^j\re+R_i{}^\ell\wedge A_\ell{}^j-A_i{}^\ell\wedge R_\ell{}_{}^j=0    ~.
    \end{equation}
    If the module $\Gamma$ is free we choose $\{s_i\}$ and $\{{}^* s^j\}$ to be $A$-linearly independent and with $\le s_i, {}^*s^j\re=\delta_i{}^j$ so that formally we recover the usual expression of the Cartan structure equation and of the Bianchi identity 
      \begin{equation}\label{freem}
         R_i{}^j=\dd A_i{}^j - A_i{}^k\wedge A_k{}^j~,~~        
        \dd R_i{}^j+R_i{}^\ell\wedge A_\ell{}^j-A_i{}^\ell\wedge R_\ell{}_{}^j=0    ~.
        \end{equation}
        \begin{remark}\label{uea}
Let us assume that the free module $\Gamma$ is associated with the trivial quantum principal bundle $A\subset A\otimes {\mathcal{O}}(G)$ where ${\mathcal{O}}(G)$ is the Hopf algebra of regular (polynomial) functions on an affine algebraic group $G$, for example $G=SO(n)$, or on a quantum-group deformation thereof. For simplicity we take the fundamental representation, so that $\Gamma=A^n$. 
Assume the matrix $A=(A^i{}_j)$ is Lie algebra valued so that $A=A_bT^b$, $A^i{}_j=A_bT^{b\:i}{}_j$ (sum over repeated indices). Then $A\wedge A=A^b\wedge A^c T^bT^c=\frac{1}{2}A^b\wedge A^c [T^b,T^c]+\frac{1}{2}A^b\wedge A^c \{T^b,T^c\}$   (matrix product understood)  and the second addend does not vanish generally, so that from \eqref{freem} the curvature $R$ is not Lie algebra valued. Similarly, the Bianchi identity in \eqref{freem} has terms proportional to $\{\{T^b,T^c\}, T^d\}$. This agrees with $R$ being a two-form valued in the universal enveloping algebra of Lie($G$) \cite{Wess}, or, using the nondegenerate pairing of the Hopf algebra ${\mathcal{U}}(\rm Lie(G))$ with ${\mathcal{O}}(G)$, with $R:{\mathcal{O}}(G)\to \Omega^2(A) $ \cite{BrM}. 
          \end{remark}
          \phantom{.}\\[-1em]
          
We next study the Bianchi identity for the curvature  $R\stdd\in  {}_A\hom(\Vect^2(A)\otimes_A\Gamma,\Gamma)$, with an ${\mathcal{R}}$-commutative algebra $A$ as in Section \ref{section2}, endowed with the canonical differential calculus $(\OM,\dd)$ and $\Gamma\in\catfp$. This requires either introducing a connection on $\Vect(A)$, in addition to the connection on $\Gamma$, or evaluating $R\stdd$ on two vector fields, so that, for all $v,z\in \Vect(A)$, $ R\stdd(v,z,\mbox{-}):\Gamma\to\Gamma$. We first describe the latter approach since it requires only the given connection on $\Gamma$.
\\

From now on we simplify the notation of the
$R$-matrix action and set,
${}^\al w=\oR^\al\ra w$ and ${}^{}_\al w=\oR_\al\ra w$ for any $w$ in an $H$-module $W$ (and any $\alpha$). Hence, for
example, for all $u,v\in \Vect(A)$,   ${}^\al v\oA{}_\al u=\oR^\al\ra v\:\!\oA\oR_\al\ra u$, and, for all $s\in \Gamma$,
\begin{equation*}
 {}^\al \std\oA {}_\al s=\oR^\al\ra^\cop\;\!\std\oA{}\oR_\al\ra s\;,\;~{}^\al T\!\stdd\oA {}_\al s=\oR^\al\ra^\cop\;\!T\!\stdd\oA{}\oR_\al\ra s~.
  \end{equation*}
Furthermore, for any multilinear map of three vector fields,  $F(u,v,z)=F(u\otimes v\otimes z)$,  the notation $$F(u,v,z)+cp^{}_{\mathcal{R}}(u,v,z)$$ for all $u,v,z\in \Vect(A)$ stands for adding the corresponding expression  for the
two other cyclic permutations of $u\otimes v\otimes z$, that is $F(u,v,z)+cp^{}_{\mathcal{R}}(u,v,z)=F(u,v,z)
+
F({}^\al v, {}^\be z, {}_{\be\al}u) + F({}^{\al\be\!}z, {}_\al u, {}_\be v)$.

\begin{theorem}\label{BIANCHI2FORR}
Let $\std$ be a left connection on an $A$-module $\Gamma\in\catfp$ and $R\stdd$ be its curvature tensor. The Bianchi identity for  $R\stdd$ is the vanishing of the linear maps $\Gamma\to \Gamma$ given by,
for all $u,v,z\in \Vect(A)$,
$$
\std{}_u\circ R\stdd(v,z,\mbox{-})-R\stdd(u,v,\mbox{-})\circ \std{}_z + R\stdd(u,[v,z],\mbox{-})+cp^{}_{\mathcal{R}}(u,v,z)=0
$$
or equivalently
$$
\std{}_u\circ R\stdd(v,z,\mbox{-})-R\stdd({}^\al v,{}^\be z,\mbox{-})\circ \std{}^{}_{{}_{\be\al} z} + R\stdd(u,[v,z],\mbox{-})+cp^{}_{\mathcal{R}}(u,v,z)=0
~.$$\end{theorem}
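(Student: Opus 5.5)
The plan is to prove the identity by direct expansion, using only the explicit formula \eqref{curvatureuvs} for $R\stdd$, the ${\mathcal{R}}$-Jacobi identity \eqref{brLie}, and the fact that the symmetry $\tau_{{\mathcal{R}}}$ gives a representation of the permutation group $S_3$ on $\Vect(A)^{\otimes 3}$. A cleaner alternative would contract the level (i) identity \eqref{Bianchi2} with $\ii_u\circ\ii_v\circ\ii_z$ and move the inner derivatives past $\stD$ with \eqref{CRID}. I prefer the direct route because every term is then already expressed through covariant derivatives $\std{}_w$ along vector fields.

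The observation that makes the bookkeeping manageable is the remark after \eqref{CRID}: $\std$ acts from the right and is never braided, so in \eqref{curvatureuvs} the $R$-matrix acts only on the vector field arguments. Hence each expression is a $\Bbbk$-linear function of $u\otimes v\otimes z$ with values in $\hom_\Bbbk(\Gamma,\Gamma)$:
\begin{itemize}
\item $G(u\otimes v\otimes z)=\std{}_u\circ\std{}_v\circ\std{}_z$;
\item $K(u\otimes v)=\std{}_u\circ\std{}_v$;
\item $B(u\otimes v)=\std{}_{[u,v]}$.
\end{itemize}
Writing $\sigma\mapsto\tau_\sigma$ for the $S_3$-representation on $\Vect(A)^{\otimes 3}$ built from $\tau_{{\mathcal{R}}}$, and $\tau_{12},\tau_{23}$ for the elementary transpositions, \eqref{curvatureuvs} reads
$$R\stdd(v,z,\mbox{-})=K\circ(\id-\tau_{{\mathcal{R}}})(v\otimes z)-B(v\otimes z).$$
In this notation, $F(u,v,z)+cp^{}_{\mathcal{R}}(u,v,z)$ is $F\circ(\id+\tau_c+\tau_c^2)$ applied to $u\otimes v\otimes z$, where $\tau_c$ is the representation of a cyclic $3$-cycle. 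Triangularity, $\mathcal R_{21}=\mathcal R^{-1}$, together with the hexagon relations \eqref{hexagon}, makes these compositions consistent, so all identities can be checked at the level of group algebra elements of $S_3$ acting on $u\otimes v\otimes z$.

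The proof then has three steps.
\begin{enumerate}
\item Cubic terms. $\std{}_u\circ R\stdd(v,z,\mbox{-})$ contributes $G\circ(\id-\tau_{23})$, and $-R\stdd(u,v,\mbox{-})\circ\std{}_z$ contributes $-G\circ(\id-\tau_{12})$. Their sum is $G\circ(\tau_{12}-\tau_{23})$. Since conjugation by the $3$-cycle permutes the transpositions cyclically, $(\tau_{12}-\tau_{23})(\id+\tau_c+\tau_c^2)$ is the difference of two sums, each over all three transpositions, and so vanishes.
\item Quadratic terms. The terms $\std{}_u\circ\std{}_{[v,z]}$, $\std{}_{[u,v]}\circ\std{}_z$ and their braided counterparts come from $B$. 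Together with the terms of $R\stdd(u,[v,z],\mbox{-})$ that are quadratic in $\std$, I expect them to cancel pairwise after cyclic summation, exactly as in the commutative proof. Each pair cancels because the same group algebra element appears with opposite signs.
\item Linear terms. What remains is $-\std{}_{[u,[v,z]]}$ summed cyclically. It vanishes by the ${\mathcal{R}}$-Jacobi identity \eqref{brLie} combined with ${\mathcal{R}}$-antisymmetry, rewritten as $[\,\cdot,[\,\cdot,\cdot\,]]\circ(\id+\tau_c+\tau_c^2)=0$.
\end{enumerate}

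The second form of the identity should follow by re-indexing the cyclic sum in the term $R\stdd(u,v,\mbox{-})\circ\std{}_z$. Applying $\id+\tau_c+\tau_c^2$ to this term is the same as applying it to its composition with $\tau_c$ or $\tau_c^2$. This moves the argument of $\std$ to the other slot, at the cost of the $R$-matrix legs $\oR^\al,\oR^\be$ appearing on the first two arguments, using the explicit form of $cp^{}_{\mathcal{R}}$.

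The main obstacle is step 2. The cyclic permutations of a term such as $\std{}_{{}^\al z}\circ\std{}_{[{}_\al v,\,\cdot\,]}$ carry nested $R$-matrix legs, for example $\oR^\al\oR^\be$ acting on $z$. To match them with the terms from $R\stdd(u,[v,z],\mbox{-})$ I will need the $H$-equivariance of the bracket, $h\ra[v,z]=[h_{(1)}\ra v,h_{(2)}\ra z]$, together with \eqref{hexagon}; these let $\tau$ be moved across $[\,\cdot,\cdot\,]$, i.e.\ they give $\tau_{{\mathcal{R}}}\circ([\,,]\otimes\id)=(\id\otimes[\,,])\circ\tau_{12}\tau_{23}$. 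I would first establish this naturality statement as a lemma. After that, every cancellation in step 2 reduces to an identity in the group algebra of $S_3$.
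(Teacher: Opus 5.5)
Your proposal is correct, but it takes a different route from the paper. The paper never expands $R\stdd$ through \eqref{curvatureuvs}; it only remarks after the proof that such a direct check is possible. Instead, it applies $\ii_u\ii_v\ii_z$ to the trivially true identity $\BDst({\stD}^{\!\!\!\!\!2}\:)=0$ and pushes the contractions to the right of $\stD$ with \eqref{CRID}. This gives \eqref{iiid} and \eqref{iiiddR}; the latter is where the hexagon relations and the $\mathcal{R}$-Jacobi identity enter. The two sides are then read off as \eqref{RBL} and \eqref{RBR}.

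Your bookkeeping does close, and the hedged steps work as follows. Write $\tau_c=\tau_{23}\tau_{12}\colon u\otimes v\otimes z\mapsto{}^\al v\otimes{}^\be z\otimes{}_{\be\al}u$.
\begin{itemize}
\item In the cubic step, the cancellation is really a coset fact rather than a conjugation argument: for any transposition $t$, the element $t(\id+\tau_c+\tau_c^2)$ is the sum of all odd permutations.
\item In the quadratic step, the terms $\std{}_u\circ\std{}_{[v,z]}$ cancel already before the cyclic sum. Invert your naturality lemma by involutivity to get $\tau_{\mathcal{R}}\circ(\id\otimes[\,,])=([\,,]\otimes\id)\circ\tau_{23}\tau_{12}$. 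This shows that the leftover $-\std{}_{{}^\al[v,z]}\circ\std{}_{{}_\al u}$ is minus the term $\std{}_{[u,v]}\circ\std{}_z$ evaluated on $\tau_c(u\otimes v\otimes z)$. So the quadratic part is $K\circ([\,,]\otimes\id)\circ(\id-\tau_c)$, which vanishes against $\id+\tau_c+\tau_c^2$ since $\tau_c^3=\id$.
\end{itemize}

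What each route buys:
\begin{itemize}
\item Your route is more elementary. It is the classical computation carried over to the symmetric monoidal category, and it makes plain that, once \eqref{curvatureuvs} is known, only three things are needed: the $S_3$-representation, equivariance of the bracket, and $\mathcal{R}$-Jacobi.
\item The paper's route gives more structure. Equations \eqref{RBL} and \eqref{RBR} match the contracted form-level identity term by term with the tensor one, so the subsequent Corollary (equivalence with $\BDst({\stD}^{\!\!\!\!\!2}\:)=0$) comes for free; your direct check does not give this. The contraction formulas \eqref{iiid} and \eqref{iiiddR} are also reused for the first Bianchi identity in Theorem~\ref{Bianchi1}.
\end{itemize}

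Finally, your re-indexing for the second form produces $\std{}_{{}_{\be\al}u}$. The ${}_{\be\al}z$ printed in the statement is a typo, as the paper's own justification via $u\otimes v\otimes z+cp^{}_{\mathcal{R}}(u,v,z)={}^\al v\otimes{}^\be z\otimes{}_{\be\al}u+cp^{}_{\mathcal{R}}(u,v,z)$ confirms.
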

\begin{proof}
The Bianchi identity is equivalent to, for all $u,v,z\in \Vect(A)$, $\ii_u\circ \ii_v\circ \ii_z\circ \BDst({\stD}^{\!\!\!\!\!2}\:)=0$. Some preliminary observations are needed in order to compute this expression.   Recall definition \ref{defCD} in the form 
$\ii_z \stD=\stDz-\stD \ii_z$ (we omit the compositon $\circ$ of maps) and the Cartan relation in \eqref{CRID},
$\ii_v \stDz =\stDaz  {_{\,}}\ii_{{}_\al\! v} +\ii_{[v,z]}$.
Then move the inner derivatives to the right of the covariant derivative in the following expression to obtain 
$$
\ii_v\ii_z\stD=\stDaz  {_{\,}}\ii_{{}_\al v} +\ii_{[v,z]}-\stDv \ii_z+ \stD \ii_v\ii_z~.$$
Similarly,
\begin{equation}\label{iiid}\begin{split}
  \ii^{}_u \ii^{}_v\ii^{}_z\stD =~ & \stDbaz{}_{\,}\ii^{}_{{}^{}_\beta \!\!\; u\,} \ii^{}_{{}^{}_\al v}+ \ii^{}_{[u,{}^\alpha z]}\ii^{}_{{}_\alpha v}+ \ii^{}_u \ii^{}_{[v,z]} \\
  & -\stDav{} _{\,}\ii^{}_{{}^{}_\al \!\!\; u\,} \ii^{}_{z}- \ii^{}_{[u,v]}\ii^{}_{z}+
\stDu{}_{\,}\ii^{}_{v} \ii^{}_{z}-\stD  \ii^{}_u \ii^{}_v\ii^{}_z~.
\end{split}
\end{equation}
The last term in this expression drops out when acting on $\Omega^2(A)\otimes\Gamma$.
Next, similarly compute $\ii_u \ii_v \ii_z \stD\stD: \Omega(A)\otimes_A\Gamma\to  \Omega(A)\otimes_A\Gamma$ to be,
\begin{equation}\label{iiiddR}
\ii_u \ii_v \ii_z\stD\stD=- R\stdd(u,v,\mbox{-})\circ \ii_z +cp^{}_{\mathcal{R}}(u,v,z)~;
\end{equation}
to obtain this expression observe that six addends cancel each other out in pairs (use \eqref{hexagon}) and that the sum of three more addends vanishes due to the  ${\mathcal{R}}$-Jacoby identity.

For any $s\in \Gamma$, the left-hand side of the Bianchi identity   
$(\ii^{}_u \ii^{}_v\ii^{}_z\stD)\,{\stD}^{\!\!\!\!\!2\,}=(\ii^{}_u \ii^{}_v\ii^{}_z{\stD}^{\!\!\!\!\!2}\,)\,\stD$
reads, using \eqref{iiid},
\begin{equation}\label{RBL}
  (\ii^{}_u \ii^{}_v\ii^{}_z\stD)\,{\stD}^{\!\!\!\!\!2}\;s=-\std{}_u{\,}R\stdd(v,z,s)-R\stdd(u,[v,z],s)+cp^{}_{\mathcal{R}}(u,v,z)~.
  \end{equation}
The right-hand side reads 
\begin{equation}\label{RBR}
  (\ii^{}_u \ii^{}_v\ii^{}_z{\stD}^{\!\!\!\!\!2\,})\,\stD s=-R\stdd(u,v,{\std}{}_z s)+cp^{}_{\mathcal{R}}(u,v,z)~.
  \end{equation}
This proves the first identity of the theorem. The second identity  follows immediately from the equality
$u\otimes_Av\otimes_A z +cp^{}_{\mathcal{R}}(u,v,z)={}^\al v\otimes_A{}^\be z\otimes_A {}_{\be\al}u +cp^{}_{\mathcal{R}}(u,v,z)$ which holds since the triangular $R$-matrix induces a representation of the symmetric group $S_3$.
\end{proof}

The Bianchi identity for $R\stdd$ can be also directly checked using the definition \eqref{curvatureuvs} of $R\stdd$.  We have {\sl derived} the  Bianchi identity for $R\stdd$ from that for ${\stD}^{\!\!\!\!\!2}\,$ and the Cartan calculus formulae for connections. Vice-versa from \eqref{RBL} and \eqref{RBR} we see that the Bianchi identity for $R\stdd$ implies that for  ${\stD}^{\!\!\!\!\!2}\,$ when acting on $\Gamma$;
left $\OM$-linearity of ${\stD}^{\!\!\!\!\!2}\,$ then implies the Bianchi identity on $\OM\oA\Gamma$ (indeed, since $\BDst(\tilde L)$ is left $\OM$-linear if so is $\tilde L$, the equality $\BDst(\tilde L)=0$ on $\Gamma$ implies the equality $\BDst(\tilde L)=0$ on $\OM\oA\Gamma$).
  We therefore have shown 
  \begin{corollary}
    The Bianchi identities for the curvature tensor $R\stdd$ of Theorem \ref{BIANCHI2FORR} are equivalent to  the Bianchi identity $\BDst({\stD}^{\!\!\!\!\!2}\:)=0$ for ${\stD}^{\!\!\!\!\!2}\,$.
    \end{corollary}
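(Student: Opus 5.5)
The plan is to prove the two directions of the equivalence separately, using the computations \eqref{RBL} and \eqref{RBR} from the proof of Theorem \ref{BIANCHI2FORR}. That proof already gives the direction ``$\BDst({\stD}^{\!\!\!\!\!2}\:)=0$ implies the identities for $R\stdd$''. Indeed, for every $s\in\Gamma$ and $u,v,z\in\Vect(A)$, it shows that $\ii_u\ii_v\ii_z\,\BDst({\stD}^{\!\!\!\!\!2}\:)(s)$ equals minus the left-hand side of the first identity of Theorem \ref{BIANCHI2FORR} applied to $s$, namely $-\big(\std{}_u R\stdd(v,z,s)-R\stdd(u,v,\std{}_z s)+R\stdd(u,[v,z],s)+cp^{}_{\mathcal{R}}(u,v,z)\big)$. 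The second identity in the theorem is equivalent to the first by the $S_3$ representation argument at the end of that proof. So for this direction I would only restate this equality.

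For the converse, assume the identities for $R\stdd$ hold. By the same equality, $\ii_u\ii_v\ii_z\,\BDst({\stD}^{\!\!\!\!\!2}\:)(s)=0$ for all $u,v,z$ and all $s\in\Gamma$. Since $\BDst({\stD}^{\!\!\!\!\!2}\:)(s)$ lies in $\Omega^3(A)\oA\Gamma\subset\TT^{3,0}\oA\Gamma$, and the modules are finitely generated and projective, an element $\sigma$ of this module vanishes once all its triple contractions vanish. To see this, use the dual bases $\{e_i\},\{\omega^i\}$ of $\Vect(A)$ and $\Omega(A)$. Every $\theta\in\TT^{3,0}$ can be reconstructed as $\theta=\omega^{i}\oA\omega^{j}\oA\omega^{k}\,\langle e_k\oA e_j\oA e_i,\theta\rangle$, up to ordering conventions. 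Tensoring this with $\Gamma$ gives $\sigma$ as a combination of $\ii_{e_i}\ii_{e_j}\ii_{e_k}\sigma$, which are all zero. Hence $\BDst({\stD}^{\!\!\!\!\!2}\:)=0$ on $\Gamma$.

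Finally I would extend from $\Gamma$ to $\OM\oA\Gamma$. The map ${\stD}^{\!\!\!\!\!2}$ is left $\OM$-linear of degree $2$, so $\BDst({\stD}^{\!\!\!\!\!2}\:)$ is left $\OM$-linear of degree $3$. To check this, apply \eqref{dleibOM} twice to $\theta\wedge\sigma$ with $\theta\in\Omega^k(A)$: the $\dd\theta$ terms cancel because the sign in \eqref{CDO} is $(-1)^n$ with $n=2$. Since every element of $\OM\oA\Gamma$ is a sum of terms $\theta\otimes_A s=\theta\wedge s$, vanishing on $\Gamma$ implies vanishing everywhere.

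The main obstacle is the nondegeneracy step in the converse. One has to check that iterated contractions with vector fields detect elements of $\Omega^3(A)\oA\Gamma$; this rests on projectivity and the identification $\TT^{3,0}\simeq{}^*\TT^{0,3}$ through the pairing \eqref{evTT}. The bookkeeping of the reversed order of contractions is harmless because the dual-basis sums run over all indices.
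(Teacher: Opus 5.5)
Your proposal is correct and follows essentially the same route as the paper. The forward direction is read off from \eqref{RBL}--\eqref{RBR}; the converse uses the same equalities to obtain the identity on $\Gamma$, and then left $\OM$-linearity of the covariant derivative of the curvature extends it to $\OM\oA\Gamma$. The only difference is that you make explicit two steps the paper leaves implicit: the dual-basis nondegeneracy of triple contractions on $\Omega^3(A)\oA\Gamma$, and the check of graded left $\OM$-linearity.
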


We now rewrite the differential Bianchi identity for the left connection $\std$ on $\Gamma$ 
in terms also of the torsion tensor $T\!\stdd$ of an auxiliary connection $\std$ on $\Vect(A)$. This uses the general sum of connections (connections on tensor product modules) construction considered in the  previous chapter which does not assume bimodule connection conditions. The case $\Gamma=\Vect(A)$ is particularly interesting since no auxiliary connection is needed.

\begin{lemma}\label{contor}
Let  $\std$ be a left connection on $\Vect(A)$.  For all $u,v,z\in \Vect(A)$ we have
\begin{equation}\label{conT}\wedge\circ\big( \std{}_u(v\otimes_A z)+u\oA [v,z]\big) + cp^{}_{\mathcal{R}}(u,v,z)=-u\wedge T\!\!\:\stdd(v,z)+ cp^{}_{\mathcal{R}}(u,v,z)\end{equation}
\end{lemma}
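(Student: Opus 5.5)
The plan is a direct computation. I expand both sides with the $\mathcal{R}$-Leibniz rule \eqref{qleib} and the explicit torsion \eqref{Torsion-uv}, and then match terms inside the cyclic sum, relabelling with the $S_3$-action induced by the triangular $R$-matrix. On vector fields, $\wedge\circ(w\otimes_A w')=w\wedge w'=w\otimes_A w'-{}^\al w'\otimes_A{}_\al w$, and $\wedge$ is $\mathcal{R}$-antisymmetric: $w\wedge w'=-{}^\al w'\wedge{}_\al w$.

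\textbf{Step 1 (torsion side).} By \eqref{Torsion-uv},
$$-u\wedge T\!\stdd(v,z)=-u\wedge\std{}_v z+u\wedge\std{}_{{}^\al z}{}_\al v+u\wedge[v,z].$$
The last term is exactly $\wedge\circ(u\otimes_A[v,z])$ on the left-hand side of \eqref{conT}, so these cancel term by term inside the cyclic sum. It therefore remains to show
$$\wedge\circ\std{}_u(v\otimes_A z)+cp^{}_{\mathcal{R}}(u,v,z)=\big(-u\wedge\std{}_vz+u\wedge\std{}_{{}^\al z}{}_\al v\big)+cp^{}_{\mathcal{R}}(u,v,z).$$

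\textbf{Step 2 (expanding the left side).} I use the second line of \eqref{qleib}:
$$\std{}_u(v\otimes_A z)={}^\al(\std{}_{{}_\be u}{}_\gamma v)\otimes_A{}_\al{}^{\be\gamma}z+{}^\al v\otimes_A\std{}_{{}_\al u}z.$$
In the first addend I apply $\mathcal{R}$-antisymmetry of $\wedge$. Since $\wedge$ is $H$-equivariant and $\mathcal{R}_{21}=\mathcal{R}^{-1}$, the outer $\oR^\al,\oR_\al$ pair is absorbed and the addend becomes $-{}^{\be\gamma}z\wedge\std{}_{{}_\be u}{}_\gamma v$. This is the place where it matters that $\std$ is not assumed equivariant: the form in \eqref{qleib}, with the $R$-matrix acting on $\std$ rewritten via the $\RA^\cop$-adjoint action as in \eqref{hT}, is exactly what makes this absorption possible.

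\textbf{Step 3 (matching under cyclic relabelling).} The left side is now a cyclic sum of two kinds of terms, $-{}^{\be\gamma}z\wedge\std{}_{{}_\be u}{}_\gamma v$ and ${}^\al v\wedge\std{}_{{}_\al u}z$. I show each matches one torsion term once the cyclic sum is reindexed, using $F(u,v,z)+cp_{\mathcal R}=F({}^\al v,{}^\be z,{}_{\be\al}u)+cp_{\mathcal R}=F({}^{\al\be}z,{}_\al u,{}_\be v)+cp_{\mathcal R}$, which holds because $\tau_{\mathcal{R}}$ represents $S_3$ (as in the proof of Theorem~\ref{BIANCHI2FORR}).
\begin{itemize}
\item Applying the relabelling $(u,v,z)\mapsto({}^\al v,{}^\be z,{}_{\be\al}u)$ to the first kind, and collapsing $R$-matrices with the hexagon relations \eqref{hexagon} and triangularity, should turn it into $u\wedge\std{}_{{}^\al z}{}_\al v$.
\item For the second kind I first use antisymmetry, ${}^\al v\wedge\std{}_{{}_\al u}z=-{}^\be(\std{}_{{}_\al u}z)\wedge{}_\be{}^\al v$. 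Relabelling by $({}^{\al\be}z,{}_\al u,{}_\be v)$ then yields $-u\wedge\std{}_vz$.
\end{itemize}

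\textbf{Main obstacle.} I expect the difficulty to lie in the $R$-matrix bookkeeping of Step 3. There, $\oR$-legs act simultaneously on the subscript vector field of $\std$, on its argument, and on the outer tensor factor. I would track them by writing each term as $\Phi\big(h\ra(u\otimes v\otimes z)\big)$ for a fixed map $\Phi$ and an element $h\in H^{\otimes 3}$. The identities needed are then equalities in $H^{\otimes 3}$, which follow from \eqref{hexagon} and $\mathcal{R}_{21}\mathcal{R}=1$.
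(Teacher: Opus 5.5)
Your Steps 1 and 2 are exactly the paper's argument: expand with the second line of \eqref{qleib}, $\mathcal{R}$-antisymmetrize the first addend, and cancel $u\wedge[v,z]$ against the bracket term of the torsion. Step 3, however, is wrong as stated. Both matchings are swapped, and the two bullet identities are false already for trivial $H$.

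In the commutative case the first kind is $-z\wedge\std{}_uv$. Its cyclic relabellings are $-u\wedge\std{}_vz$ and $-v\wedge\std{}_zu$. No cyclic relabelling can produce $+u\wedge\std{}_zv$: the sign differs, and $(u,z,v)$ is an odd reordering of $(u,v,z)$. Likewise the second kind is $v\wedge\std{}_uz$. Antisymmetrizing it and relabelling by $(z,u,v)$ gives $u\wedge\std{}_zv$, not $-u\wedge\std{}_vz$. Taking $u=v=z$, either bullet would force $u\wedge\std{}_uu=0$, which fails in general. Since each bullet is a separate identity between cyclic sums, Step 3 as written proves nothing, even though the total you are aiming at is correct.

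The correct assignment is the opposite one, and your two relabellings are exactly the ones that realize it. Put $\Phi(a\otimes b\otimes c):=a\wedge\std{}_bc$, and let $\tau_{12}=\tau_{\mathcal{R}}\otimes\id$ and $\tau_{23}=\id\otimes\tau_{\mathcal{R}}$ on $\Vect(A)^{\otimes 3}$.

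For the first kind: it equals $-\Phi\circ\tau_{12}\circ\tau_{23}$, since $\tau_{12}\circ\tau_{23}$ is the map $u\otimes v\otimes z\mapsto{}^{\al\be}z\otimes{}_\al u\otimes{}_\be v$. Your relabelling $({}^\al v,{}^\be z,{}_{\be\al}u)$ is precomposition with the inverse $\tau_{23}\circ\tau_{12}$. It therefore turns the first kind into $-u\wedge\std{}_vz$, with no further computation.

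For the second kind: it equals $\Phi\circ\tau_{12}$. Relabelling it by $({}^{\al\be}z,{}_\al u,{}_\be v)$, i.e.\ precomposing with $\tau_{12}\circ\tau_{23}$, gives $\Phi\circ\tau_{23}$. This is $u\wedge\std{}_{{}^\al z}{}_\al v$. The antisymmetrization in your second bullet is unnecessary.

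Organized this way, the $R$-matrix bookkeeping you flagged as the main obstacle reduces to $\tau_{12}^2=\tau_{23}^2=\id$. That is, it reduces to $\tau_{\mathcal{R}}$ generating a representation of $S_3$, which is what the paper's appeal to the Yang--Baxter equation amounts to.
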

\begin{proof}
Recalling the second line of \eqref{qleib} we evaluate the connection on the tensor product and obtain the following nine terms on the left-hand side of \eqref{conT},
  \begin{equation}
    \begin{split}
    {}^\al&({\std{}_{{\!\!}^{\!\!}_\be\!\!\: u}}{\!\:}^{}_\gamma{}_{\!} v)\wedge {}^{}_\al{\,}^{\be\gamma\!}z  + {}^\be v\wedge \std{}_{{}^{}_{\!\!\be} u\,}z+ u\wedge [v,z] +cp^{}_{\mathcal{R}}(u,v,z)=\\
      &~~~~~~~=  - {\,}^{\be\gamma\!}z\wedge {\std{}_{{\!}^{\!\!}_\be\!\!\: u}}{\!\:}^{}_\gamma{}_{\!} v + {}^\be v\wedge \std{}_{{\!\!\:\!}^{}_\be\!\!\; u}\!\;z+ u\wedge [v,z] +cp^{}_{\mathcal{R}}(u,v,z)
            \end{split}\end{equation}
          where in the first addend in the second line we used the  ${\mathcal{R}}$-antisymmetry of the wedge product
          (and similarly for the first addend of each  cyclic  ${\mathcal{R}}$-permutation). These nine terms, up to
          use of Yang-Baxter equation
          (schematically ${}^{\beta\lambda}\otimes {}^{}_\be{}_{}^\eta\otimes {}^{}_{\lambda\eta}=
{}^{\lambda\be}\otimes {}_{}^\eta{}^{}_\be\otimes {}^{}_{\eta\lambda}$) equal the nine terms in $-u\wedge T(v,z)+cp^{}_{\mathcal{R}}(u,v,z)$. 
\end{proof}

Given  $s\in \Gamma$ we consider the operator
$$({}^\alpha\std)\oA {}_\al s:\Vect(A)\to \Omega(A)\oA\Vect(A)\oA \Gamma~.$$ 
For all $a\in A, v\in \Vect(A)$, its evaluation on the product $av$ satisfies \eqref{genleftconn}, and since the $R$-matrix is normalized, $\varepsilon(\oR^\al)\otimes \oR_\al=1\otimes 1$, we have that ${}^\alpha\std\oA {}_\al s$ is a bona fide $\Gamma$-valued connection,  $$({}^\al\std)(a v) \oA {}_\al s=
\dd a\otimes_A v \oA s + a({}^\al \std)(v)\oA {}_\al s~.$$
It can be lifted to the tensor product $\Vect(A)\oA\Vect(A)$ as in \eqref{sum}. This implies that the
$\Gamma$-valued covariant derivative $({}^\alpha\std)_u\oA {}_\al s:\Vect(A)\to \Vect(A)\oA \Gamma$ 
lifts to $\Vect(A)\oA\Vect(A)$ via the  ${\mathcal{R}}$-Leibniz rule \eqref{qleib},
\begin{equation}\label{alleib}({}^\alpha\std)_u(v\oA z)\oA {}_\al s=
({}^{\be\alpha}\std)_uv\oA {}_\be z\oA {}_\al s+
{}^\be v\oA ({}^\alpha\std)\!_{{}^{}_\be u\,}z \oA {}_\al s~.
\end{equation}
Its $\Gamma$-valued torsion $T^{}_{^\al\!}\stdd\oA {}_\al s$ is defined as
$T^{}_{^\al\!}\stdd(u,v)\oA {}_\al s =-\ii_u\circ\ii_v\circ \dd{}_{^\al\!}\stdd(I)\otimes_A{}_\al s$ and from \eqref{hT} we see that it is equivalently given by
\begin{equation}\label{altor}
 T^{}_{{}_{}^\alpha}\!\!\:\stdd(v,z)\otimes {}_\al s= ({}^\al\std){}_uv\otimes_A {}_\al s - ({}^\al\std){}_{{}^\be v}\:\!{{}^{}_\be u}\otimes_A {}_\al s\,-\:\!{[u,v]}\otimes_A s=({}^{\al\!\:} T\!\stdd)(v,z)\otimes {}_\al s~,\end{equation}
the last equality showing its relation to the torsion $T\!\stdd$ of the original connection on $\Vect(A)$.
Using the $\Gamma$-valued connection properties \eqref{alleib} and \eqref{altor} we notice that  Lemma \ref{contor} applies as well to ${}^\alpha\std\oA {}_\al s$, just replace $\std$ with ${}^\al\std$ in \eqref{conT} and tensor with $\oA{}_\al s$.    \\

We now consider both connections $\std: \Vect(A)\to \Omega(A)\otimes_A\Vect(A)$ and  $\std:\Gamma\to \Omega(A)\otimes_A\Gamma$ and  lift them, iterating \eqref{sum}, to a left connection $\std: \Vect(A)\otimes_A \Vect(A)\otimes_A \Gamma\to \Omega(A)\otimes_A\Vect(A)\otimes_A\Vect(A)\otimes_A\Gamma$. Similarly, iterating \eqref{qleib} we obtain  the covariant derivative,  for all $u\in\Vect(A)$, $\std{}_u: \Vect(A)\otimes_A \Vect(A)\otimes_A \Gamma\to \Vect(A)\otimes_A\Vect(A)\otimes_A\Gamma$.

\begin{theorem}\label{BIandT}
  Let  $\std$ be a left connection on $\Gamma\in\catfp$ and $R\stdd$ be its curvature. Let  $\std$,  by slight abuse of notation, also denote a left connection on  $\Vect(A)$, and let $T\!\stdd$ be its torsion.
  The Bianchi identity for $R\stdd$ is equivalent to, for all $u,v,z\in \Vect(A)$ and $s\in \Gamma$,
  \begin{equation*}\begin{split}
    \std{}_uR\stdd(v\otimes_A z\otimes_A s)-R\stdd(\std{}_u(v\otimes_A z\otimes_A s)) + R\stdd(u\otimes_A ({}^{\al\!\:} T\!\stdd)(v,z&)\otimes_A {}_\al s)\\ &+cp^{}_{\mathcal{R}}(u,v,z)=0
  \end{split}\end{equation*}
  where we used the notation $R\stdd(v\otimes_A z\otimes_A s)=R\stdd(v, z, s)$, and $\std{}_u(v\otimes_A z\otimes_A s)$ denotes the covariant derivative on the tensor product module $\Vect(A)\otimes_A\Vect(A)\otimes_A\Gamma$.
 \end{theorem}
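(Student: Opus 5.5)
The plan is to derive the identity from the first form of Theorem \ref{BIANCHI2FORR} by rewriting its second and third terms using the covariant derivative on the tensor product $\Vect(A)\oA\Vect(A)\oA\Gamma$. We expand $R\stdd(\std{}_u(v\oA z\oA s))$ by iterating the ${\mathcal{R}}$-Leibniz rule \eqref{qleib} twice, which gives three terms.

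\begin{enumerate}
\item A term in which ${}^{\be\al}\std$ hits $v$, namely $R\stdd\big(({}^{\be\al}\std)_u v\oA{}_\be z\oA{}_\al s\big)$.
\item A term $R\stdd\big({}^\be v\oA({}^\al\std)_{{}_\be u}z\oA{}_\al s\big)$, i.e.\ the second addend of \eqref{alleib} tensored with ${}_\al s$.
\item A term in which the connection on $\Gamma$ hits $s$, with $u$ braided past $v\oA z$. This is of the form $R\stdd({}^\al v\oA{}^\be z\oA\std{}_{{}_{\be\al}u}s)$, up to an extra $R$-matrix action on $v\oA z$ coming from $\tau_{12}$.
\end{enumerate}

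The first two terms together are exactly $R\stdd\big(({}^\al\std)_u(v\oA z)\oA{}_\al s\big)$, by \eqref{alleib}. The third, using triangularity to collapse the $R$-matrices, should equal $R\stdd({}^\al v,{}^\be z,\mbox{-})\circ\std{}_{{}_{\be\al}u}\,s$. After summing over cyclic ${\mathcal{R}}$-permutations this is the term appearing in the second form of Theorem \ref{BIANCHI2FORR}; I will relabel by cyclicity so that the $\std$ on $\Gamma$ acts after the cycled vector.

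Next, because $R\stdd$ is defined on $\Vect^2(A)\oA\Gamma$, we have $R\stdd(w\oA s)=\tfrac12R\stdd((\wedge w)\oA s)$, so $R\stdd$ sees only the ${\mathcal{R}}$-antisymmetric part of its two vector-field arguments. Apply Lemma \ref{contor} to the $\Gamma$-valued connection ${}^\al\std\oA{}_\al s$; the text after \eqref{altor} explains that this is allowed. Then, under the cyclic sum,
\[
R\stdd\big(({}^\al\std)_u(v\oA z)\oA{}_\al s\big)+R\stdd\big(u\oA[v,z]\oA s\big)=-R\stdd\big(u\oA({}^\al T\!\stdd)(v,z)\oA{}_\al s\big)+cp^{}_{\mathcal{R}}.
\]
Substituting this into $-R\stdd(\std{}_u(\cdots))$ turns the claimed identity into
\[
\std{}_uR\stdd(v,z,s)-R\stdd({}^\al v,{}^\be z,\std{}_{{}_{\be\al}u}s)+R\stdd(u,[v,z],s)+cp^{}_{\mathcal{R}}=0,
\]
which is the second form in Theorem \ref{BIANCHI2FORR}. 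Every step is an equality, so the two statements are equivalent.

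The main obstacle is the $R$-matrix bookkeeping in two places. The first is applying Lemma \ref{contor} inside $R\stdd(\cdot\oA{}_\al s)$: the lemma is stated with $\wedge$ and cyclic sums, and we must check that tensoring with ${}_\al s$, where the $\al$ is tied to ${}^\al\std$, commutes with the cyclic ${\mathcal{R}}$-permutation. Here I would use that $H$-equivariance of $\wedge$ and of the braiding lets the leg $\oR^\al$ pass through. The second is identifying the third Leibniz term with $R\stdd({}^\al v,{}^\be z,\mbox{-})\circ\std_{{}_{\be\al}\cdot}$. For this I would write out \eqref{sum} twice, use the hexagon relations \eqref{hexagon} to merge the braidings of $u$ past $v$ and past $z$, and use $\mathcal R_{21}=\mathcal R^{-1}$ to cancel the leftover $\oR_\gamma\oR^\gamma$ factors. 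This mirrors the second line of \eqref{qleib}.
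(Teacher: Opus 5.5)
Your route is the paper's route. You split $R\stdd(\std{}_u(v\oA z\oA s))=X+Y$ with $X=R\stdd\big(({}^\al\std)_u(v\oA z)\oA{}_\al s\big)$ and $Y=R\stdd\big({}^\al v\oA{}^\be z\oA\std{}_{{}_{\be\al}u}s\big)$. You match $Y$ with the second form of Theorem \ref{BIANCHI2FORR} and apply Lemma \ref{contor} to ${}^\al\std\oA{}_\al s$. Your handling of $Y$ via \eqref{hexagon} and triangularity is fine.

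The gap is the last step, which contradicts your own display. That display carries the sign Lemma \ref{contor} dictates, and it says $-X+cp^{}_{\mathcal R}(u,v,z)=R\stdd(u\oA[v,z]\oA s)+R\stdd\big(u\oA({}^\al T\!\stdd)(v,z)\oA{}_\al s\big)+cp^{}_{\mathcal R}(u,v,z)$. Substituting this into the claimed expression gives
\[
\std{}_uR\stdd(v,z,s)-Y+R\stdd(u,[v,z],s)+2\,R\stdd\big(u\oA({}^\al T\!\stdd)(v,z)\oA{}_\al s\big)+cp^{}_{\mathcal R}(u,v,z).
\]
This is the Bianchi expression plus twice the cyclically summed torsion term, not the Bianchi expression itself.

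What your computation actually proves is that the Bianchi identity is equivalent to
\[
\std{}_uR\stdd(v\oA z\oA s)-R\stdd\big(\std{}_u(v\oA z\oA s)\big)-R\stdd\big(u\oA({}^\al T\!\stdd)(v,z)\oA{}_\al s\big)+cp^{}_{\mathcal R}(u,v,z)=0,
\]
with a minus sign in front of the torsion term. This is the correct identity: for trivial $H$ it reads $\sum_{\rm cyc}\{(\nabla_uR)(v,z)-R(u,T(v,z))\}=0$. By antisymmetry of $R$ this is Kobayashi--Nomizu's $\sum_{\rm cyc}\{(\nabla_uR)(v,z)+R(T(u,v),z)\}=0$.

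The plus-sign version fails in general, since the two connections in the theorem are independent. On $\mathbb{R}^3$ take the trivial line bundle with $\nabla s=\mathrm{d}s+x\,\mathrm{d}y\,s$, so $R(\partial_x,\partial_y)=1$. On vector fields take $\nabla_XY=\partial_XY+\tfrac12T(X,Y)$ with $T$ antisymmetric, $T(\partial_y,\partial_z)=\partial_y$ and all other components zero. At $(\partial_x,\partial_y,\partial_z)$ the two versions then differ by $2s\neq0$.

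The paper's own proof makes the corresponding slip. It records the output of Lemma \ref{contor} as $+R\stdd(u\otimes(T_{{}^\al}\stdd)(v,z)\oA{}_\al s)+cp^{}_{\mathcal R}$, although the lemma's right-hand side is $-u\wedge T\!\stdd(v,z)+cp^{}_{\mathcal R}$. The proof of Theorem \ref{Bianchi1} uses the correct sign. So the step you assert cannot be supplied for the statement as printed. You should have flagged the sign mismatch rather than claiming the substitution lands on Theorem \ref{BIANCHI2FORR}.
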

 \begin{proof}
   The Bianchi identity evaluated on a section $s\in \Gamma$ reads (cf. Theorem \ref{BIANCHI2FORR}),
   $$
\std{}_u( R\stdd(v\oA z\oA s))-R\stdd({}^\al v\oA {}^\be z\oA\std{}^{}_{{\!}_{\be\al} z}s) + R\stdd(u\oA [v,z]\oA s)+cp^{}_{\mathcal{R}}(u,v,z)=0
~.$$
 We add and subtract $R\stdd(({}^\alpha\std)_u(v\oA z)\oA {}_\al s)$ to this identity. On the one hand the terms
 $-R\stdd(({}^\alpha\std)_u(v\oA z)\oA {}_\al s)$ and $-R\stdd({}^\al v\oA {}^\be z\oA \std{}^{}_{{\!}_{\be\al} z})$ sum to  $-R\stdd(\std{}_u(u\otimes_A v\otimes_A s))$. On the other hand
we apply Lemma \ref{contor} to the $\Gamma$-valued connection $({}^\alpha\std)_u\oA {}_\al s$ so that, recalling the  ${\mathcal{R}}$-antisymmetry of the first two entries of the curvature tensor,
\begin{equation*}\begin{split}
  R\stdd(({}^\alpha\std)_u(v\oA z)\oA {}_\al s)+ R\stdd(&u\oA [v,z]\oA s)+cp^{}_{\mathcal{R}}(u,v,z)=\,\\ &  =R\stdd(u\otimes (T_{{}^\al\!}\stdd)(v,z)\otimes_A {}_\al s)+cp^{}_{\mathcal{R}}(u,v,z)~.
\end{split}
\end{equation*}
The theorem is proven recalling \eqref{altor}.  
 \end{proof}

Recalling \eqref{Bnabla}, for any left $A$-module  $W\in \catfp$,
and left connection $\std: W\to \Omega(A)\oA W$,
we define, for all $u\in \Vect(A)$,  $\Bdst{}_u:=\ii_u\circ \Bdst$ on left A-Linear maps  $\tilde L\in {}_A\hom(W,W)$.
We then have
\begin{equation}\label{ddldlld}
  \Bdst{}_u\tilde L=\std{}_u\circ  \tilde L-\tilde L \circ \std{}_u~,
\end{equation}
indeed, for all $w\in W$,
  \begin{equation*}
    \begin{split}
    \Bdst{}_{\!u}\tilde L(w)&=\ii_u( \Bdst \tilde L)(w)=\ii_u\std (\tilde L(w))-\ii_u(\id\otimes \tilde L)(\std w)=
                        \std{}_u \tilde L(w)-\tilde L_{\,} \ii_u\std w\\ &=
                                                  (\std{}_u\circ  \tilde L-\tilde L \circ \std{}_u) w~.
    \end{split}
  \end{equation*}
Setting $W=\Vect(A)\oA\Vect(A)\oA\Gamma$ and $\tilde L=R\stdd$ we immediately obtain the following corollary of Theorem \ref{BIandT}.
 
\begin{corollary}\label{corR}
  The Bianchi identity equivalently reads, for all $u,v,z\in \Vect(A)$, $s\in\Gamma$,
   $$(\Bdst{}_uR\stdd)(v\otimes_A z\otimes_A s) + R\stdd(u\otimes_A ({}^{\al\!\:} T\!\stdd)(v,z)\otimes_A {}_\al s)+cp^{}_{\mathcal{R}}(u,v,z)=0~.
  $$
\end{corollary}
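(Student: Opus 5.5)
The corollary should follow directly from Theorem \ref{BIandT} once the first two terms there are recognised as the covariant derivative of $R\stdd$ viewed as a left $A$-linear map on a tensor product module. I take $W=\Vect(A)\oA\Vect(A)\oA\Gamma$ with the left connection $\std$ obtained by iterating \eqref{sum} from the connection on $\Vect(A)$ and the connection on $\Gamma$. By the remark after \eqref{sum} this lift is unique, since the sum of connections is associative. Its covariant derivative $\std{}_u$ on $W$ is the one obtained by iterating \eqref{qleib}, which is exactly the operator appearing in Theorem \ref{BIandT}.

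Next I regard $R\stdd$ as an element of ${}_A\hom(W,\Gamma)$ via $R\stdd(v\oA z\oA s)=R\stdd(v,z,s)$. This is legitimate because $R\stdd$ is left $A$-linear and is well defined on $\Vect(A)\oA\Vect(A)\oA\Gamma$: it factors through the projection onto $\Vect^2(A)\oA\Gamma$. I then apply \eqref{Bnabla} with $\Gamma$ replaced by $W$ and $\Gamma'=\Gamma$, and contract with $\ii_u$.

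Here I expect the only point needing care. Formula \eqref{ddldlld} was derived for $\tilde L\in{}_A\hom(W,W)$, but the same three-line computation goes through verbatim when source and target differ. It uses only two facts: $\ii_u$ acts on the $\Omega(A)$ factor from the left, and $\id\otimes\tilde L$ acts on the right factor, so $\ii_u\circ(\id\otimes\tilde L)=\tilde L\circ\ii_u$. No braiding appears, because the connection acts from the right and the inner derivative from the left, as stressed after \eqref{CRID}. This gives
$$(\Bdst{}_uR\stdd)(v\oA z\oA s)=\std{}_u\big(R\stdd(v\oA z\oA s)\big)-R\stdd\big(\std{}_u(v\oA z\oA s)\big)~.$$

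Substituting this into the identity of Theorem \ref{BIandT} yields the displayed equation, with the torsion term and the cyclic sum $cp^{}_{\mathcal{R}}(u,v,z)$ unchanged. The identity of Theorem \ref{BIandT} is equivalent to the Bianchi identity, and the substitution is an equality term by term. The reformulation is therefore also equivalent to the Bianchi identity, which proves the corollary.
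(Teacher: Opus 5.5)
Your proposal is correct and follows the paper's own argument. As in the paper, you set $W=\Vect(A)\oA\Vect(A)\oA\Gamma$ and $\tilde L=R\stdd$ in \eqref{ddldlld}, then substitute into Theorem \ref{BIandT}. Your remark that \eqref{ddldlld} must be used for $\tilde L\in{}_A\hom(W,\Gamma)$ rather than ${}_A\hom(W,W)$ tightens a point the paper glosses over, and the computation does carry over; note that the step $\ii_u\circ(\id\otimes R\stdd)=R\stdd\circ\ii_u$ also uses the left $A$-linearity of $R\stdd$.
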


Let us now take $\Gamma=\Vect(A)$, then there is a single connection $\std:\Vect(A)\to \Omega(A)\oA\Vect(A)$ and both the curvature $R\stdd$ and the torsion $T\!\stdd$ refer to this connection; the preceding identity becomes the second Bianchi identity. If furthermore the torsion vanishes, it reads  $$(\Bdst{}_uR\stdd)(v, z, s) +cp^{}_{\mathcal{R}}(u,v,z)=0~.$$ 
 \subsubsection*{First Bianchi identity}
 Let $\std$ be a connection on $\Vect(A)$ and $\stD: \Omega^\bullet\oA\Vect(A)\to  \Omega^{\bullet+1}\oA\Vect(A)$ as defined in \eqref{defofdnabla} its extension.  Here we just assume a differential graded algebra $(\OM,\dd)$ over an algebra $A$, with $\Vect(A)$  the dual of the right  $A$-module $\Omega(A)$. 
The first Cartan structure equation in this general vector bundle approach to connections is the very definition of torsion as $\stD(I)$, the exterior covariant derivative of the canonical
element $I\in \Omega(A)\otimes\Vect(A)$, corresponding to the identity map $\Vect(A)\to \Vect(A)$. 
In this context the first Bianchi identity
states that the covariant derivative of the torsion equals the curvature evaluated on the canonical element $I$, that is,
\begin{equation}\label{ddIddI}
  \stD (\stD (I))= ({\stD}^{\!\!\!\!\!2}\: )(I)~,
  \end{equation}
an equality in $\Omega^3(A)\oA\Vect(A)$ that trivially holds since ${\stD}^{\!\!\!\!\!2}\;=\stD \circ \stD$. 
\\

Considering  dual bases
$\{e_i\}$, $\{\omega^i\}$, $i=1,\ldots n$ of the finitely generated and projective left $A$-module $\Vect(A)$ and its dual  $\Omega(A)$
we can write the component expression of the first Bianchi identity.
We define the torsion two-form coefficients
\begin{equation}\label{T2form}
  T^i:=\le\stD I,\omega^i\re~,
  \end{equation}
(these equal the $\Tsf^i$ ones defined in \cite{LC}, end of Sec. 5). Then, recalling that for all $u\in \Vect(A)$,  $\le u, \omega^i\re e_i=u$, we have
$T^i\oA e_i=\stD I$. From \eqref{ARcoeff} and \eqref{ALEREA} we obtain the Cartan structure equation for the torsion two-form coefficients,
\begin{equation}\label{TCartan}\begin{split}
  T^i&=\le\stD I,\omega^i\re=\le\dd\omega^j\oA e_j-\omega^j\wedge A_j{}^\ell\oA e_\ell,\omega^i\re\\ &=\dd \omega^j\,\le e_j,\omega^i\re-\omega^j\wedge A_j{}^i~.
\end{split}
\end{equation}
The first Bianchi identity \eqref{ddIddI} then reads
$\stD (T^j\oA e_j)=\omega^j\wedge R_j{}^k\oA e_k$ and is equivalent to $\dd T^k\oA e_k+ T^j\wedge A_j{}^k\oA e_k=\omega^j\wedge R_j{}^k\oA e_k$, that is
\begin{equation}\label{TBianchi}
  \dd T^k\le \,e_k,\omega^i\re+T^j\wedge A_j{}^i=\omega^j\wedge R_j{}^i~.
\end{equation}
If $\Vect(A)$ is a free left $A$-module the elements in the bases are linearly independent and hence $\le e_k , \omega^i\re=\delta_k{}^i$, so that formally we recover the usual expression $ \dd T^i+T^j\wedge A_j{}^i=\omega^j\wedge R_j{}^i$.
\\

We now present the first Bianchi identity for the torsion tensor $T\stdd\in {}_A\hom(\Vect^2(A),\Vect(A))$ using an ${\mathcal{R}}$-commutative algebra $A$ with canonical differential calculus $(\OM, \dd)$ and  an arbitrary left connection on  $\Vect(A)\in  \catfp$. This is doable since we know how to sum connections (consider connections on tensor product modules) without requiring the bimodule connection property.
\\

\begin{theorem}\label{Bianchi1}
In terms of the curvature $R\stdd$ and the torsion $T\!\stdd$, the first Bianchi identity reads,
for all $u,v,z\in \Vect(A)$,
$$
R\stdd(u,v,z)+T\!\stdd([u,v],z) - \stDu(T\!\stdd(v,z))+cp^{}_{\mathcal{R}}(u,v,z)=0~,
$$
or equivalently
$$
R\stdd(u,v,z)-\big(\std{}_u\circ T\!\stdd- T\!\stdd\circ \std{}_u\big)(v, z) + T\!\stdd(u, T\!\stdd(v,z))+cp^{}_{\mathcal{R}}(u,v,z)=0~.
$$
These Bianchi identities are equivalent to that in \eqref{ddIddI}. \end{theorem}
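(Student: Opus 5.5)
The plan mirrors the proof of Theorem \ref{BIANCHI2FORR}. I will contract the identity \eqref{ddIddI} with three inner derivatives, $\ii_u\ii_v\ii_z$, and compute both sides using the Cartan relations \eqref{CRID}.

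\textbf{Right-hand side.} Since ${\stD}^{\!\!\!\!\!2}$ is left $\OM$-linear and $I=\omega^j\oA e_j$, we have $({\stD}^{\!\!\!\!\!2})(I)=\omega^j\wedge{\stD}^{\!\!\!\!\!2}\,e_j$. Applying $\ii_u\ii_v\ii_z$ and the graded ${\mathcal{R}}$-Leibniz rule \eqref{braidedleibnizii} three times distributes the single contraction landing on $\omega^j$ over the three slots. Each time, $\le w,\omega^j\re e_j=w$, and the other two contractions act on ${\stD}^{\!\!\!\!\!2}e_j$, which produces $-R\stdd$. Keeping track of the braidings, this should give $R\stdd(u,v,z)+cp^{}_{\mathcal{R}}(u,v,z)$ up to an overall sign. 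The shuffling of the contracted vector to the third slot uses the $S_3$ representation, exactly as in the last step of Theorem \ref{BIANCHI2FORR}. Equivalently, one may start from \eqref{iiiddR} and evaluate it on the degree-one element $I$, using $\ii_z I=z$; this yields $-R\stdd(u,v,z)+cp^{}_{\mathcal{R}}(u,v,z)$ directly. I will adopt this second route since \eqref{iiiddR} is already available.

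\textbf{Left-hand side.} The term is $\ii_u\ii_v\ii_z\stD(\stD I)$, with $\stD I\in\Omega^2(A)\oA\Vect(A)$. Apply formula \eqref{iiid}, whose last term drops on 2-forms, and use $\ii_a\ii_b\stD I=-T\stdd(a,b)$. The covariant-derivative terms then become $-\stD{}_{{}^{\be\al}z}T\stdd({}_\be u,{}_\al v)+\stD{}_{{}^\al v}T\stdd({}_\al u,z)-\stDu T\stdd(v,z)$. The bracket terms become $-T\stdd([u,{}^\al z],{}_\al v)-T\stdd(u,[v,z])+T\stdd([u,v],z)$. Using ${\mathcal{R}}$-antisymmetry of $T\stdd$ and regrouping the six terms into cyclic ${\mathcal{R}}$-permutations should produce $-\stDu T\stdd(v,z)+T\stdd([u,v],z)+cp^{}_{\mathcal{R}}(u,v,z)$, with the minus sign of \eqref{iiid} absorbed. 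The regrouping will be checked with \eqref{hexagon} and the Yang–Baxter relation, as in Lemma \ref{contor}. Equating the two sides gives the first form of the identity.

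\textbf{Equivalence and second form.} Conversely, the contractions $\ii_u\ii_v\ii_z$ for all $u,v,z$ detect elements of $\Omega^3(A)\oA\Vect(A)$ by finite projectivity. Hence the tensor identity is equivalent to \eqref{ddIddI}.

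To reach the second form, apply Lemma \ref{contor} (with $\Gamma=\Vect(A)$). Write $\stDu(T\stdd(v,z))=(\Bdst{}_uT\stdd)(v,z)+T\stdd(\wedge\circ\std{}_u(v\oA z))$ via \eqref{ddldlld}, where $T\stdd$ is viewed on $\Vect(A)\wedge\Vect(A)$. Lemma \ref{contor} converts $T\stdd(\wedge\circ\std{}_u(v\oA z))+T\stdd(u,[v,z])+cp^{}_{\mathcal{R}}$ into $-T\stdd(u,T\stdd(v,z))+cp^{}_{\mathcal{R}}$. The term $T\stdd([u,v],z)+cp^{}_{\mathcal{R}}$ coincides with $T\stdd(u,[v,z])+cp^{}_{\mathcal{R}}$ by antisymmetry and cyclic relabelling.

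The main obstacle is the bookkeeping of $R$-matrix legs when matching the six terms of \eqref{iiid} to cyclic permutations. The relative sign between $T\stdd(u,T\stdd(v,z))$ and the other terms also needs care, and I would verify it against the commutative limit.
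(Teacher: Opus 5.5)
Your proposal follows the paper's proof essentially step for step. You contract \eqref{ddIddI} with $\ii_u\ii_v\ii_z$, use \eqref{iiiddR} with $\ii_z I=z$ on the curvature side, and use \eqref{iiid} with $\mathcal{R}$-antisymmetry on the torsion side; your six terms are the right ones and do regroup into $T\!\stdd([u,v],z)-\stDu(T\!\stdd(v,z))+cp^{}_{\mathcal{R}}(u,v,z)$. You then use nondegeneracy of the pairing for the equivalence, and \eqref{ddldlld} with Lemma \ref{contor} for the second form.

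One sign has to be fixed. $T\!\stdd([u,v],z)+cp^{}_{\mathcal{R}}(u,v,z)$ equals $-T\!\stdd(u,[v,z])+cp^{}_{\mathcal{R}}(u,v,z)$, not $+T\!\stdd(u,[v,z])+cp^{}_{\mathcal{R}}(u,v,z)$. Already in the commutative case the two cyclic sums are opposite, and the paper uses $[u,v]\wedge z+cp^{}_{\mathcal{R}}=-u\wedge[v,z]+cp^{}_{\mathcal{R}}$. This minus sign is exactly what lets $-T\!\stdd(\std{}_u(v\oA z))-T\!\stdd(u,[v,z])+cp^{}_{\mathcal{R}}$ become $+T\!\stdd(u,T\!\stdd(v,z))+cp^{}_{\mathcal{R}}$ via Lemma \ref{contor}. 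With the sign as you state it, the bracket terms would not cancel.
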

\begin{proof}
  Apply $\ii_u\ii_v\ii_z$ to the Bianchi identity \eqref{ddIddI}. Recalling \eqref{iiiddR} and that $\ii_z(I)=z$, the right-hand side becomes
  \begin{equation}\label{iii1}
    \ii_u\ii_v\ii_z{\stD}^{\!\!\!\!\!2}\:(I)=- R\stdd(u,v,z) +cp^{}_{\mathcal{R}}(u,v,z)~.
    \end{equation}
  For the left-hand side use \eqref{iiid} to move the three contractions past the connection, then recall the definition of $T\!\stdd$ and its  ${\mathcal{R}}$-antisymmetry to obtain 
  \begin{equation}\label{iii2}
    (\ii_u\ii_v\ii_z\stD)(\stD(I))=T([u,v],z)-\std{}_u(T\!\stdd(v,z))+cp^{}_{\mathcal{R}}(u,v,z)~.
    \end{equation}
  Comparison with the previous equality gives the first equality of the theorem.
  Since $$[u,v]\wedge z+cp^{}_{\mathcal{R}}(u,v,z)=-u\wedge [v,z]+cp^{}_{\mathcal{R}}(u,v,z)$$ the first equality holds also replacing $T([u,v],z)$ with $-T(u, [v,z])$. Next add and subtract $T\!\stdd(\std{}_u(v\oA z))$ to obtain
  $$
  R\stdd(u,v,z)-\big(\std{}_u\circ T\!\stdd- T\!\stdd\circ \std{}_u\big)(v, z) - T\!\stdd\big(u\oA [v,z]+\std{}_u(v\oA z)\big) +cp^{}_{\mathcal{R}}(u,v,z)=0~.
  $$
  Recalling Lemma \ref{contor} we obtain the second equality.

  From \eqref{iii1} and \eqref{iii2} and nondegeneracy of the pairing between vector fields and forms we immediately see that the first Bianchi identity in \eqref{ddIddI} follows from that for $R\stdd$ and $T\!\stdd$.
  \end{proof}

Setting in \eqref{ddldlld} $\tilde L=T\!\stdd$ and $W=\Vect(A)\oA\Vect(A)$  we obtain the following corollary.
 
\begin{corollary}\label{corT}
  The first Bianchi identity equivalently reads, for all $u,v,z\in \Vect(A)$,
  $$
R\stdd(u,v,z)-(\Bdst{}_u {}\!\:T\!\stdd)(v, z) + T\!\stdd(u, T\!\stdd(v,z))+cp^{}_{\mathcal{R}}(u,v,z)=0~.
  $$
\end{corollary}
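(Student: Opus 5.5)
This corollary should follow directly from the second identity of Theorem~\ref{Bianchi1} by specializing the general formula \eqref{ddldlld}. I will take $W=\Vect(A)\oA\Vect(A)$, equipped with the left connection obtained by lifting the given connection $\std$ on $\Vect(A)$ to the tensor product via \eqref{sum}. Its covariant derivative $\std{}_u$ on $W$ is then the $\mathcal{R}$-Leibniz lift \eqref{qleib}.

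The first step is to check that $\tilde L=T\!\stdd$ is a legitimate input to \eqref{ddldlld}. The target of $T\!\stdd$ is $\Vect(A)$, not $W$, so I read \eqref{ddldlld} for a map between two different modules, as in the general definition \eqref{Bnabla} with $\Gamma=W$ and $\Gamma'=\Vect(A)$. The computation following \eqref{ddldlld} only uses $\ii_u\circ(\id\otimes\tilde L)=\tilde L\circ\ii_u$, and this holds verbatim for $\tilde L\in{}_A\hom(W,\Vect(A))$.

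Next I need $T\!\stdd$, viewed as a map on $W=\Vect(A)\oA\Vect(A)$, to lie in ${}_A\hom(W,\Vect(A))$. This holds because $T\!\stdd\in{}_A\hom(\Vect^2(A),\Vect(A))$. I precompose with the $A$-bilinear projection $\Vect(A)\oA\Vect(A)\to\Vect^2(A)$, and this is consistent with the notation $T\!\stdd(v,z)=T\!\stdd(v\oA z)$ used in Theorem~\ref{Bianchi1}. Applying \eqref{ddldlld} then gives
\[
(\Bdst{}_u T\!\stdd)(v,z)=\big(\std{}_u\circ T\!\stdd-T\!\stdd\circ\std{}_u\big)(v\oA z).
\]
Here $\std{}_u$ acting on $v\oA z$ is exactly the tensor-product covariant derivative appearing in the second identity of Theorem~\ref{Bianchi1}, since in that theorem's proof it enters through Lemma~\ref{contor}.

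Finally, I substitute this expression into the second identity of Theorem~\ref{Bianchi1}, term by term inside the cyclic $\mathcal{R}$-sum. The result is the stated identity. Because Theorem~\ref{Bianchi1} already shows equivalence with \eqref{ddIddI}, the corollary's formula is equivalent to it as well.

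The only delicate point is the identification of the two readings of $\std{}_u$ on $v\oA z$ made above, namely that the lift used when interpreting $T\!\stdd\circ\std{}_u$ in Theorem~\ref{Bianchi1} coincides with the connection on $W$ that \eqref{ddldlld} requires. Both are defined by \eqref{sum} and \eqref{qleib}, so they agree, and no further computation is needed.
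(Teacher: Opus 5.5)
Your proposal is correct and follows the paper's own argument: the paper sets $\tilde L=T\!\stdd$ and $W=\Vect(A)\oA\Vect(A)$ in \eqref{ddldlld} and substitutes the result into the second identity of Theorem~\ref{Bianchi1}. Your remark that \eqref{ddldlld} must here be read for $\tilde L\in{}_A\hom(W,\Vect(A))$, via \eqref{Bnabla} with two different modules, correctly makes explicit a point the paper leaves implicit.
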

When the torsion vanishes the Bianchi identity becomes the algebraic equation for the curvature: $
R\stdd(u,v,z)+cp^{}_{\mathcal{R}}(u,v,z)=0$. This is the noncommutative algebraic Bianchi identity.

\phantom{M}\\

{\bf Acknowledgements}
The author would like to thank G. Landi for fruitful discussions, and the organizers of the CIRM conference {\sl Applications of NonCommutative Geometry to Gauge Theories, Field Theories, and Quantum Space-Time}, (7–11 April 2025) for providing a highly stimulating environment in which part of this work was developed.
Partial support from INFN, CSN4, Iniziativa
Specifica GSS is acknowledged.  This research has a financial 
support from Universit\`a del Piemonte Orientale. This article is based upon work from COST Action CaLISTA CA21109 supported by COST (European Cooperation in Science and Technology). The author is
affiliated to INdAM-GNFM.

\end{document}